\pgfplotsset{compat=newest}
\newtheorem{theorem}{Theorem}[section]
\newtheorem{proposition}[theorem]{Proposition}
\newtheorem{corollary}[theorem]{Corollary}
\newtheorem{lemma}[theorem]{Lemma}
\newtheorem{introthm}{Theorem}
\newtheorem{introcor}[introthm]{Corollary}
\theoremstyle{definition}
\newtheorem{definition}[theorem]{Definition}
\newtheorem*{example*}{Example}
\newtheorem*{claim*}{Claim}
\newtheorem{remark}[theorem]{Remark}
\newtheorem*{remark*}{Remark}
\newcommand{\calE}{{\mathcal E}}
\newcommand{\calG}{{\mathcal G}}
\newcommand{\calH}{{\mathcal H}}
\newcommand{\fG}{\mathfrak{G}}
\newcommand{\A}{{\mathbb A}}
\newcommand{\N}{{\mathbb N}}
\newcommand{\Z}{{\mathbb Z}}
\newcommand{\from}{\colon \thinspace}
\newcommand{\param}{{\mathchoice{\mkern1mu\mbox{\raise2.2pt\hbox{$
\centerdot$}}
\mkern1mu}{\mkern1mu\mbox{\raise2.2pt\hbox{$\centerdot$}}\mkern1mu}{
\mkern1.5mu\centerdot\mkern1.5mu}{\mkern1.5mu\centerdot\mkern1.5mu}}}
\newcommand{\AG}{{\A(\Gamma)}}
\newcommand{\ga}{{\gamma}}
\begin{document}

\title[Excursions of generic geodesics in right-angled Artin groups]{Excursions of generic geodesics in right-angled Artin groups and graph products}


  \author[Yulan Qing]{Yulan Qing}
  \address{Dept. of Mathematics\\ 
  University of Toronto\\
  Toronto, Ontario, Canada M5S 2E4}
  \email{\tt yulan.qing@utoronto.ca}

  \author[Giulio Tiozzo]{Giulio Tiozzo}
  \address{Dept. of Mathematics\\ 
  University of Toronto\\
  Toronto, Ontario, Canada M5S 2E4}
  \email{\tt tiozzo@math.utoronto.ca}

 \date{\today}

\maketitle

\begin{abstract}
Motivated by the notion of \emph{cusp excursion} in geometrically finite hyperbolic manifolds, 
we define a notion of \emph{excursion} in any subgroup of a given group, and study its asymptotic distribution for right-angled Artin groups and graph 
products. 

In particular, for any irreducible right-angled Artin group we show that with respect to the counting measure, the maximal excursion of a generic geodesic in any flat tends to $\log n$, where $n$ is the length of the geodesic. 

In this regard, irreducible RAAGs behave like a free product of groups. In fact, we show that the asymptotic distribution of excursions 
detects the growth rate of the RAAG and whether it is reducible.


\end{abstract}

\section{introduction}

On a finitely generated group $G$, there are fundamentally two ways to take averages over $G$ in 
order to study the asymptotic distribution of group elements:

\begin{enumerate}
\item a finite generating set $S$ defines a \emph{word metric} $\Vert \cdot \Vert$ on $G$, and one can take for each $n$ 
the \emph{counting measure} 
$$P_n := \frac{1}{|S_n|} \sum_{g \in S_n} \delta_g$$
that is, the uniform measure on the sphere $S_n$ of radius $n$. 
The boundary measure obtained as a limit of such measures $(P_n)$ (if it exists, and in an appropriate compactification) is known as a \emph{Patterson-Sullivan measure}; 

\item a measure $\mu$ on $G$ defines a \emph{random walk}, and the limit of the sequence $(\mu^n)$ of convolution measures defines a \emph{harmonic measure} on 
the boundary of $G$. 
\end{enumerate}

If $G$ is non-amenable, different averaging procedures may produce vastly different results, and many authors have considered the question 
of whether these two measures are singular, or if they are in the same measure class. 

One approach to this question is through the \emph{excursion} of generic geodesics.
In particular, if $G < \textup{Isom}(\mathbb{H}^n)$ is a nonuniform lattice, 
one can look at the \emph{excursion} of generic geodesics in the \emph{cusp} of the quotient manifold $\mathbb{H}^n/G$ (\cite{GL}, \cite{GMT1}). 
This technique also works for the mapping class group acting on Teichm\"uller space \cite{GMT2}. 

In group-theoretic terms, the excursion of a geodesic into the cusp may be rephrased as follows. For instance, let $G$ be a torsion-free
non-uniform lattice in $PSL(2, \mathbb{C}) = \textup{Isom}^+(\mathbb{H}^3)$; then the parabolic subgroup $P$ is isomorphic to $\mathbb{Z}^2$, 
and the group is hyperbolic relative to $P$. Then the excursion of the geodesic into the cusp corresponds to the maximal distance traveled 
in any left coset of the parabolic subgroup. 

In this paper, we will define the notion of excursion for general groups, and determine the distribution of excursions for right-angled Artin groups. 

\begin{definition} \label{D:exc1}
Let $G$ be a finitely generated group, and $H < G$ be a subgroup. For any $g \in G$, let us define the \emph{excursion} of $g$ 
in $H$ as follows. 
If $\gamma$ is a geodesic segment in the Cayley graph of $G$, we define the excursion $\mathcal{E}_H(\gamma)$ of $\gamma$ into $H$ as the maximal distance travelled 
by $\gamma$ in a left coset of $H$, i.e.: 
$$\mathcal{E}_H(\gamma) := \max_{t \in G} \textup{diam }(\gamma \cap t H).$$
Then, for a given element $g \in G$ let us define the excursion $\mathcal{E}_H(g)$ as the maximum value of $\mathcal{E}_H(\gamma)$ among 
all geodesic segments $\gamma$ which join $1$ and $g$. 
\end{definition}


\smallskip
\textbf{Example.} To illustrate the definition of excursion, let us consider the group $G = \mathbb{Z} \star \mathbb{Z}^2 = \langle a, b, c \ | \ [b,c] = 1\rangle$. Its Cayley graph for the standard generating set can be seen as a ``tree of planes", 
where each plane is a translate of $H = \mathbb{Z}^2$. 
We will now define the excursion function of a geodesic as the maximal distance traveled by a geodesic 
in a given plane. To make things precise, pick $g \in G$, and suppose that a geodesic representative $\gamma$ of $g$ can be written as 
$$g = a^{\epsilon_1} b^{m_1} c^{n_1} \dots a^{\epsilon_k} b^{m_k} c^{n_k}$$
where $\epsilon_i, m_i, n_i \in \mathbb{Z}$.
For each $i \leq k$, the distance traveled in the $i^{th}$ plane (for the graph metric) is 
$$d_i = |m_i| + |n_i|.$$
Then one sets $\mathcal{E}_H(g) =  \mathcal{E}_H(\gamma) = \max \{ d_1, \dots, d_k \}$.\footnote{Note that one can replace the previous definition of $d_i$ with the euclidean distance $d^{(2)}_i := \sqrt{m_i^2 + n_i^2}$, and one 
has $d_i \sim d^{(2)}_i$ up to uniform multiplicative constants, hence it does not make a difference which definition we take.}

\smallskip

The above example is a free product, but in this paper we will consider more generally excursions in \emph{right-angled Artin groups} and study 
their asymptotic distribution with respect to the counting (or Patterson-Sullivan) measure. 

\smallskip
Let $\Gamma$ be a finite graph, and let $G = \mathbb{A}(\Gamma)$ be its associated right-angled Artin group. We will consider the 
standard generating set, where generators correspond to vertices of $\Gamma$. The Cayley graph $X$ of $\mathbb{A}(\Gamma)$ 
is not in general hyperbolic, and in fact for any complete subgraph $\Gamma' \subseteq \Gamma$ the vertices of $\Gamma'$ generate 
an abelian subgroup of $G$, and its translates in $X$ are flats of rank $k = |\Gamma'|$. 

We will show that the maximal excursion of a geodesic of length $n$ in any family of flats is of the order $\log n$ for a set of geodesics 
of probability which tends to $1$. 

\begin{introthm} \label{T:main}
Let $\A(\Gamma)$ be a non-elementary, irreducible right-angled Artin group, let $\Gamma' \subseteq \Gamma$ be a complete subgraph with $k$ vertices, and let $H = \mathbb{A}(\Gamma') < G$ be the corresponding abelian subgroup. Then there exist $c_1, c_2 > 0$ such that the counting measure satisfies
\begin{equation} \label{E:main-thm}
P_n\left( g \in G \ : \ c_1 \leq \frac{\mathcal{E}_H(g)}{\log n} \leq c_2 \right) \to 1
\end{equation}
as $n \to \infty$.
In fact, the above inequality holds for $c_2 = \frac{k}{\log \lambda}$ where $\lambda$ is the growth rate of $\A(\Gamma)$, and $c_1 = \frac{1}{\log \lambda}-\epsilon$ 
for any $\epsilon > 0$. 
\end{introthm}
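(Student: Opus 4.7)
My plan is to bound the two sides of \eqref{E:main-thm} by separate counting arguments that compare the exponential growth of $G = \A(\Gamma)$ with the polynomial growth of $H = \A(\Gamma') \cong \Z^k$. For an irreducible non-elementary RAAG one has $|S_n(G)| \asymp \lambda^n$ with $\lambda > 1$, while the sphere of radius $m$ in $H$ (with the word metric inherited from $G$) satisfies $|S_m(H)| \asymp m^{k-1}$; the ratio of these two growth rates is what forces the $\log n$ scaling.

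\textbf{Upper bound.} To prove $\calE_H(g) \leq c_2 \log n$ with probability tending to $1$, I would count the elements of $S_n$ with $\calE_H(g) \geq D$. Each such element admits a length-additive factorization $g = u\,h\,v$ with $h \in H$, $\|u\|+\|h\|+\|v\|=n$, and $\|h\| \geq D$, obtained by cutting a geodesic at the endpoints of a maximal $H$-excursion. The number of such factorizations is at most
\[
\sum_{m \geq D}|S_m(H)|\sum_{n_1+n_2=n-m}|S_{n_1}(G)|\,|S_{n_2}(G)| \;\lesssim\; n\,D^{k-1}\,\lambda^{n-D}.
\]
Dividing by $|S_n(G)| \asymp \lambda^n$ yields $P_n(\calE_H(g) \geq D) \lesssim n\,D^{k-1}\lambda^{-D}$, which tends to $0$ as soon as $D \log \lambda - \log n \to \infty$. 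Taking $D = c_2 \log n$ with $c_2 = k/\log \lambda$ (a slightly larger constant than needed for the probability to decay) then gives the upper bound in \eqref{E:main-thm}.

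\textbf{Lower bound and main obstacle.} Proving $\calE_H(g) \geq c_1 \log n$ for $c_1 = 1/\log \lambda - \epsilon$ is the harder direction, since one must rule out \emph{every} geodesic representative of $g$ having only short $H$-excursions. The plan is to compare two growth series: the full $F_G(z) = \sum_n |S_n(G)| z^n$, whose dominant singularity lies at $z = 1/\lambda$, and the ``truncated'' series $F_G^{(D)}(z) = \sum_n |A_n(D)|\,z^n$, where $A_n(D) = \{g \in S_n : \calE_H(g) < D\}$. Using the piecewise normal form of RAAGs together with the irreducibility of $\Gamma$---which provides a generator outside $\Gamma'$ not commuting with all of $\Gamma'$, producing length-additive concatenation between consecutive blocks---one should obtain a combinatorial identity expressing $F_G^{(D)}$ by the same recipe as $F_G$, but with the growth series $F_H(z)$ of $H$ replaced by its truncation $F_H^{<D}(z)$. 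Because $F_H(z) - F_H^{<D}(z) = O(z^D)$, the dominant singularity $1/\lambda_D$ of $F_G^{(D)}$ perturbs the one of $F_G$ by an error of order $\lambda^{-D}$, so that
\[
\frac{|A_n(D)|}{|S_n(G)|} \;\lesssim\; \left(\frac{\lambda_D}{\lambda}\right)^{\!n} \;\lesssim\; \exp\!\bigl(-c\,n\,\lambda^{-D}\bigr),
\]
which tends to $0$ whenever $n \lambda^{-D} \to \infty$, i.e.\ whenever $D < (1/\log \lambda) \log n$. The main obstacle is the combinatorial bookkeeping: writing a closed formula for $F_G$ in which the $H$-block appears as a separable factor is delicate precisely because $\A(\Gamma)$ has non-unique geodesic representatives arising from commutations, so care is needed to match ``$\calE_H(g) < D$'' exactly with truncating the $H$-factor rather than merely reshuffling long $H$-runs into allowed alternative representatives. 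The irreducibility hypothesis, by yielding a generator that can genuinely separate consecutive $H$-blocks, is what allows this translation to go through.
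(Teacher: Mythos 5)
Your proposal is essentially correct, and for the hard direction it follows the same route as the paper: the lower bound $\calE_H(g)\geq(\tfrac{1}{\log\lambda}-\epsilon)\log n$ is obtained in both cases by writing a closed formula for the truncated growth series $\sum_n |A_n(D)|z^n$, observing that truncating the $\Z$-factor perturbs the dominant singularity by $O(\lambda^{-D})$, and concluding $|A_n(D)|/|S_n|\lesssim(\lambda_D/\lambda)^n\to 0$ when $n\lambda^{-D}\to\infty$; this is exactly the content of Proposition \ref{P:formula}, Proposition \ref{P:gen-funE} and Lemmas \ref{L:lambda}--\ref{L:liminf}. The ``combinatorial bookkeeping'' you flag as the main obstacle is resolved in the paper by the amalgam decomposition $\fG\Gamma=(\fG A\times\fG B)\star_{\fG B}\fG Z$ with Alonso--Chiswell admissibility, together with Proposition \ref{equal} matching the normal-form count of consecutive $a$'s with the geodesic (and hyperplane) definitions of excursion. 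Where you genuinely differ is the upper bound: your union bound over length-additive factorizations $g=uhv$ (valid because cosets of $H$ are convex, so a long excursion really does yield such a splitting) is more elementary than the paper's, which extracts both bounds from the same singularity analysis. Two caveats. First, your direct estimate $P_n(\calE_H\geq D)\lesssim nD^{k-1}\lambda^{-D}$ is only \emph{borderline} at $k=1$: with $D=\tfrac{1}{\log\lambda}\log n$ it gives $O(1)$ rather than $o(1)$, so you get $c_2=\tfrac{1}{\log\lambda}+\epsilon$ instead of the stated sharp constant (the paper's own Lemma \ref{L:liminf} shows the limit at the exact constant lies strictly between $0$ and $1$, so this boundary case is delicate there too). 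Second, do not try to make the whole flat $H=\A(\Gamma')$ appear as a separable factor of the growth series: the decomposition $(\fG A\times\fG B)\star_{\fG B}\fG Z$ requires every vertex outside $A$ to commute with all or none of $A$, which fails for a general clique $\Gamma'$. The paper sidesteps this by running the generating-function argument only for vertex groups and passing to flats via the elementary sandwich $\max_i\calE_{v_i}(g)\leq\calE_H(g)\leq k\max_i\calE_{v_i}(g)$ (Lemma \ref{L:higher}), which is also all your lower bound needs, since $\calE_H(g)\geq\calE_{v_1}(g)$.
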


Let us recall that a RAAG is \emph{irreducible} if it is not a direct product of smaller RAAGs (equivalently, its opposite graph $\Gamma^{op}$ is connected), 
and \emph{non-elementary} if the graph $\Gamma$ has at least two vertices. 
Note that for right-angled Artin groups the excursion also has a more geometric interpretation in terms of the $CAT(0)$ geometry; 
namely, 
the excursion of $g$ in a vertex group $H = \langle v \rangle$ is the 
maximal number of consecutive $v$-hyperplanes separating $1$ and $g$
(see Section \ref{S:geom}).

\begin{introcor}
The growth rate of the group can be detected from the asymptotics of the excursion: in fact, if $H = \langle v \rangle$ 
is a vertex group and $g_n$ is a group element of length $n$ chosen uniformly at random, then 
\begin{equation} \label{E:detect}
\frac{\mathcal{E}_H(g_n)}{\log n} \to \frac{1}{\log \lambda}
\end{equation}
in probability. 
\end{introcor}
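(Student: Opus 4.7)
The plan is to derive this as an immediate consequence of Theorem~\ref{T:main}, applied to the single-vertex complete subgraph $\Gamma' = \{v\} \subseteq \Gamma$. With this choice, $k = |\Gamma'| = 1$ and $H = \A(\Gamma') = \langle v \rangle$, so the upper constant in Theorem~\ref{T:main} becomes $c_2 = \frac{k}{\log \lambda} = \frac{1}{\log \lambda}$, matching (up to $\epsilon$) the lower constant $c_1 = \frac{1}{\log \lambda} - \epsilon$.

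Next I would translate this into convergence in probability in the standard way. Given $\delta > 0$, apply Theorem~\ref{T:main} with $\epsilon = \delta$; the theorem then guarantees that
\begin{equation*}
P_n\left( g \in G \ : \ \frac{1}{\log \lambda} - \delta \leq \frac{\mathcal{E}_H(g)}{\log n} \leq \frac{1}{\log \lambda} \right) \to 1
\end{equation*}
as $n \to \infty$. The event inside this probability is contained in
\begin{equation*}
\left\{ g \in G \ : \ \left| \frac{\mathcal{E}_H(g)}{\log n} - \frac{1}{\log \lambda} \right| \leq \delta \right\},
\end{equation*}
so this larger event also has $P_n$-measure tending to $1$. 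Since $\delta > 0$ is arbitrary, this is exactly the definition of convergence in probability, giving \eqref{E:detect}.

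The main obstacle is not in the corollary itself but lies entirely inside Theorem~\ref{T:main}: one must produce matching upper and lower bounds on $\mathcal{E}_H(g)/\log n$ of the form $k/\log\lambda$ and $1/\log\lambda - \epsilon$. For a vertex subgroup ($k=1$) these two bounds coincide as $\epsilon \to 0$, which is precisely why the growth rate $\lambda$ can be recovered from the excursion asymptotics. For higher-rank abelian subgroups ($k\geq 2$) the gap between $c_1$ and $c_2$ in Theorem~\ref{T:main} would prevent such a sharp convergence statement, so the restriction to vertex subgroups in the corollary is essential.
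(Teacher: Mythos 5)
Your proposal is correct and matches the paper's intended derivation: the corollary is exactly the $k=1$ case of Theorem~\ref{T:main} (equivalently, Theorem~\ref{T:main-exc} for vertex groups), and your translation of the two-sided bound with $\epsilon \to 0$ into convergence in probability is the standard and intended argument. Your closing remark about why the statement is sharp only for vertex groups ($k=1$) is also consistent with the paper.
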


Moreover, it turns out that the distribution of excursions can be also used to characterize irreducible RAAGs. Indeed, we say that the excursion in a given subgroup $H$ is \emph{asymptotically logarithmic} if there exist constant $c_1, c_2 >0$ such that \eqref{E:main-thm} holds. We prove the following (see Theorem \ref{T:all-cases}): 

\begin{introthm} \label{T:irred}
A non-elementary right-angled Artin group $\A(\Gamma)$ is irreducible if and only if it has pure exponential growth and the generic excursion in \emph{every} vertex group of $\Gamma$ is asymptotically logarithmic.
\end{introthm}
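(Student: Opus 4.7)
The plan is to prove the biconditional by treating the two implications separately.

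For the forward implication, assume $\A(\Gamma)$ is non-elementary and irreducible. Applying Theorem A with $\Gamma' = \{v\}$ (the case $k = 1$) to each vertex $v \in \Gamma$ immediately yields that the excursion in the vertex group $\langle v \rangle$ is asymptotically logarithmic in the sense of \eqref{E:main-thm}. The pure exponential growth of irreducible non-elementary RAAGs is taken as input, either from the growth-series analysis used in the proof of Theorem A or by citation.

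For the converse I would argue the contrapositive. Assuming $\A(\Gamma)$ is reducible, the opposite graph $\Gamma^{op}$ is disconnected, so one can write $\Gamma = \Gamma_1 * \cdots * \Gamma_m$ as a non-trivial join with $m \geq 2$ and each $\A(\Gamma_i)$ a non-trivial irreducible RAAG. Then $\A(\Gamma) = \A(\Gamma_1) \times \cdots \times \A(\Gamma_m)$, and since the standard word length on a direct product is the $\ell^1$-sum of the factor lengths, the spherical growth series factorize as $f(z) = \prod_{i=1}^m f_i(z)$. By the forward direction applied to each irreducible factor, every $f_i$ has a simple pole at $z = 1/\lambda_i$, where $\lambda_i$ is the growth rate of $\A(\Gamma_i)$. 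The case split is now driven by the maximum $\lambda := \max_i \lambda_i$. If this maximum is attained by $r \geq 2$ indices, then $f(z)$ has a pole of order $r$ at $1/\lambda$, so a standard Tauberian estimate gives $|S_n| \asymp n^{r-1} \lambda^n$, contradicting pure exponential growth and killing condition (i). If instead the maximum is attained by a unique index, WLOG $\lambda_1 > \lambda_j$ for all $j \geq 2$, then growth is pure exponential with rate $\lambda_1$; but for any vertex $v \in \Gamma_2$, the factorized growth series shows that the counting measure $P_n$ has geometric tail $P_n(|g_2| \geq \ell) \lesssim (\lambda_2/\lambda_1)^\ell$, so $|g_2|$ is bounded in probability. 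Combined with the inequality $\mathcal{E}_{\langle v \rangle}(g) \leq |g_2|$, this bounds the excursion in $\langle v \rangle$ uniformly in $n$, so it cannot satisfy $\geq c_1 \log n$, and condition (ii) fails.

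The technical heart of the argument, and what I expect to be the main obstacle, is the inequality $\mathcal{E}_{\langle v \rangle}(g) \leq \mathcal{E}_{\langle v \rangle}(g_j)$ for a vertex $v \in \Gamma_j$ in the join decomposition, where $g_j$ denotes the projection of $g$ to $\A(\Gamma_j)$. The idea is that $v$ commutes with every generator outside $\Gamma_j$, so any geodesic representative of $g$ can be reshuffled at no cost in length so that its $\langle v \rangle$-traversals occur entirely within the $g_j$-subword; then left cosets of $\langle v \rangle$ intersect such a geodesic exactly as they do the shorter geodesic for $g_j$ inside $\A(\Gamma_j)$. Making this rigorous requires a careful analysis of left cosets of $\langle v \rangle$ in the product and of the reordering of geodesic words via commutation — this is where the RAAG structure is used most essentially. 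Once this geometric reduction is in place, the rest of the converse is bookkeeping via the factorization $f = \prod f_i$.
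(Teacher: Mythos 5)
Your proposal is correct and takes essentially the same route as the paper: the forward direction is Theorem A with $k=1$ together with exact exponential growth, and the converse splits on whether the maximal growth rate among the irreducible factors of $\A(\Gamma)=\A(\Gamma_1)\times\cdots\times\A(\Gamma_m)$ is attained more than once (then $\calG(n)\asymp n^{s-1}\lambda^n$ with $s\geq 2$, killing pure exponential growth) or exactly once (then a vertex in a slower factor has non-logarithmic excursion). The only cosmetic differences are that where you derive a geometric tail bound for $|g_2|$, the paper simply observes $a_{n,0}\geq a_n^{(2)}$ and $\liminf_n a_n^{(2)}/a_n>0$, so the excursion is zero with positive asymptotic probability, and the identity $\mathcal{E}_v(g)=\mathcal{E}_v(g_1)$ that you flag as the technical heart is immediate from the normal-form characterization of excursion in Proposition~\ref{equal}.
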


For the random walk measure, in the case of relatively hyperbolic $G$, Sisto and Taylor \cite{ST} proved that the generic excursion is logarithmic (see also \cite{Si}).  
Their result also applies to the mapping class group and $\textup{Out}(F_n)$, and it seems likely that their techniques also apply to RAAGs.

In the probabilistic setting, logarithmic distributions for excursions date back a long time: e.g. it has been known at least since R\'enyi \cite{Re} 
that the longest streak of consecutive heads in $n$ coin tosses has size $\log n$ (see e.g. \cite{Sch}); in our language, this is basically equivalent to saying 
that the generic excursion for the simple random walk on $\mathbb{Z} \star \mathbb{Z}$ is logarithmic. 
Moreover, a logarithmic law for the excursion of geodesics in the cusp of a hyperbolic manifold has been famously proved by Sullivan \cite{Su}. 

The proof is based on the study of the generating function (or \emph{spherical growth series}) for the group, and in particular we do not use any random walk 
technique here.
First, we give three definitions of excursion, which we prove to be equivalent for RAAGs: the \emph{algebraic excursion} as given in Definition \ref{D:exc1}, the \emph{geometric excursion} in terms of hyperplanes, and the excursion using a normal form; 
then, in Section \ref{S:growth-series} we construct generating functions for the cardinalities of the set of elements in a sphere of radius $n$ with a given upper bound $E$ on the excursion (Proposition \ref{P:formula}). 
Then, in Section \ref{S:proof} we study how the smallest pole of such a function varies with $E$. Since the group has exact exponential growth, this implies 
that the smallest pole is simple, and the convergence of the power series to the original one is exponential; this establishes the bound on the excursion. 

\medskip
\textbf{Acknowledgements.} We thank Andrew McCormack for providing interesting numerical simulations about the distribution of generic excursions.
We also thank Ilya Gekhtman and Sam Taylor for useful conversations. The second named author is partially supported by NSERC and the Alfred P. Sloan 
Foundation.


\section{background} \label{S:back}

\subsection{Graph products and right-angled Artin groups}
A \emph{graph product of groups} is a construction of larger groups from smaller groups based on a simplicial graph $\Gamma$. Let $\Gamma$ be a simplicial graph, and let us associate to each vertex $v$ a group $G_v$, which we call a \emph{vertex group}. Then the graph product of the $G_v$'s with respect to $\Gamma$ is defined as the quotient $F/R$, where $F$ is the free product of all the $G_v$'s and $R$ is the normal subgroup generated by subgroups of the form $[G_u,G_v]$ whenever there is an edge joining $u$ and $v$ in $\Gamma$. We denote the graph product of groups on a finite graph $\Gamma$ as $\fG \Gamma$.

A \emph{right-angled Artin group (RAAG)} is a specific type of graph product where vertex groups are all isomorphic to $\Z$. 
In fact there is a standard presentation of a right-angled Artin group with respect to a  simplicial graph $\Gamma$ as follows:
\[ \A(\Gamma) := \big \langle v \text{ is a vertex in }\Gamma \ | \ [v, w] = 1, (v, w) \text{ is an edge in }\Gamma \big \rangle. \]
That is to say, generators commute if and only if there is an edge between the two corresponding vertices in $\Gamma$. 
The \emph{opposite graph} $\Gamma^{\rm op}$ is the graph with the same vertex set as $\Gamma$ and whose edge set is the complement of the edge set of $\Gamma$. A right-angled Artin group is called \emph{irreducible} if the opposite graph $\Gamma^{\rm op}$ is connected; that is, the group is not a direct 
product of two RAAGs defined by smaller subgraphs. A right-angled Artin group is \emph{non-elementary} if the defining graph $\Gamma$ has at least $2$ vertices.

\subsection{Growth of groups}
Suppose $G$ is a finitely generated group and $S$ a finite symmetric set of generators. 
Let $Cay(G, S)$ be the Cayley graph of $G$ with respect to $S$. 
Recall the \emph{word length} of an element $G$ is 
$$\Vert g \Vert := \min \{ k \geq 0 \ : \ g = s_1 s_2 \ldots s_k, s_i \in S \}$$ 
and this defines the \emph{word metric} (or \emph{edge metric}) on $Cay(G, S)$ by setting $d(g, h) := \Vert g^{-1} h \Vert$.
Let us now consider the ball of radius $n$ in the Cayley graph:
\[ 
B_{n}(G,S)=\{x\in G\mid x=a_{1}\cdot a_{2}\cdots a_{k}{\mbox{ where }}a_{i}\in S{\mbox{ and }}k\leq n\}
\]
while the \emph{sphere of radius }$n$ is the set of elements of length $n$: 
$$S_n = S_{n}(G,S):=\{ g \in G \ : \ \Vert g \Vert = n \}.$$
To emphasize the group or subgroup in which we calculate word length, we also use $\ell_{H} (g)$ to denote the length of $g$ with respect to the 
word metric in a subgroup $H$. 
Let us define the \emph{growth rate} of $G$ with respect to the generating set $S$ as 
$$\lambda(G) := \lim_{n \to \infty} \left( \#B_n(G, S) \right)^{1/n}.$$
We say a group has \emph{exponential growth} if $\lambda = \lambda(G) > 1$. 
Now, let us denote as $\calG(n): =  \# S_{n}(G,S)$ the cardinality of the set of group elements of length $n$. 
Finally, we say that $G$ has \emph{pure exponential growth} if there exists a constant $C > 0$ such that 
$$\frac{1}{C} \leq \frac{ \calG(n) }{\lambda^n} \leq C \qquad \textup{for any }n,$$
and it has \emph{exact exponential growth} if 
$$\lim_{n \to \infty} \frac{ \calG(n) }{\lambda^n}$$
exists. 
Irreducible, non-elementary right-angled Artin groups have exact exponential growth:

\begin{theorem}[\cite{GTT}, Theorem 11.1] \label{T:exact}
If $G = \A(\Gamma)$ is an irreducible, non-elementary right-angled Artin group with the standard generating set, then $G$ has exact exponential growth. In fact, there exists $\lambda = \lambda(G) > 1$ and $c > 0$ such that 
$$\lim_{n \to \infty} \frac{\calG(n)}{\lambda^n}  = c.$$
\end{theorem}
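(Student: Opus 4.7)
The plan is to analyze the \emph{spherical growth series}
$$\mathsf{S}(z) := \sum_{n \geq 0} \calG(n)\, z^n$$
and show that it is a rational function whose smallest pole on the circle of convergence is unique and simple. Once this is established, a standard partial fraction expansion yields
$$\calG(n) = c\,\lambda^n + O(\mu^n), \qquad \mu < \lambda,$$
for some $c > 0$, which is precisely the assertion of the theorem. The bulk of the work therefore lies in proving rationality and in controlling the location and order of the dominant poles of $\mathsf{S}(z)$.

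First, I would exhibit a finite state automaton $\mathcal{A}$ over the alphabet $S = \{v^{\pm 1} : v \in V(\Gamma)\}$ whose accepted language is a set of geodesic representatives, one for each group element. Several such normal forms are available for RAAGs (a piling/ShortLex normal form works, going back to Hermiller--Meier), with states encoding the bounded amount of data required to decide whether appending a given letter preserves geodesicity. From this one obtains a nonnegative integer transition matrix $M$ such that
$$\calG(n) = \mathbf{v}^T M^n \mathbf{w}$$
for suitable initial and acceptance vectors $\mathbf{v}, \mathbf{w}$. In particular $\mathsf{S}(z)$ is rational and its radius of convergence equals $1/\lambda$, where $\lambda$ is the Perron eigenvalue of $M$.

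The second step is to invoke the Perron--Frobenius theorem. If $M$ is primitive, i.e.\ nonnegative, irreducible, and aperiodic, then $M^n = \lambda^n\,\xi\eta^T + O(\mu^n)$ for some $\mu < \lambda$, so that $\calG(n)/\lambda^n \to c > 0$ directly. Thus the entire theorem reduces to verifying primitivity of the transition matrix of the chosen geodesic automaton.

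The main obstacle is precisely this primitivity statement, and it is here that the two hypotheses on $\Gamma$ enter in essential ways. The \emph{non-elementary} assumption (at least two vertices) ensures $\lambda > 1$ and guarantees enough loops in $\mathcal{A}$ to handle aperiodicity: a self-loop (append a single letter) together with a length-two loop (a commuting or non-commuting pair) typically suffices to make the $\gcd$ of loop lengths equal to $1$. The \emph{irreducibility} assumption (that $\Gamma^{\rm op}$ is connected) is what prevents $\mathsf{S}(z)$ from factoring as a product of growth series of two direct factors, a situation in which the transition matrix would be block-diagonal and the dominant pole could acquire higher order, leading to polynomial-times-exponential growth instead of pure exponential growth. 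Concretely, one would argue that connectivity of $\Gamma^{\rm op}$ allows one to transition between any two states of $\mathcal{A}$ by inserting suitable sequences of commuting and non-commuting letters, thereby yielding irreducibility of $M$; the careful bookkeeping across all state types is the combinatorial heart of the argument and is the step I would model most closely on the treatment in \cite{GTT}.
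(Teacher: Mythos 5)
The paper does not actually prove this statement: Theorem \ref{T:exact} is imported wholesale from \cite{GTT}, so there is no internal proof to compare against. Judged on its own terms, your strategy (a regular language of unique geodesic representatives, hence a rational $\mathsf{S}(z)$ with $\calG(n) = \mathbf{v}^T M^n \mathbf{w}$, followed by Perron--Frobenius) is a legitimate and standard route, and the first step is sound --- rationality also follows directly from Chiswell's formula $1/G(t) = p(-2t/(1+t))$, which is the form the present paper actually uses downstream.

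The genuine gap is in your reduction of the theorem to ``primitivity of the transition matrix.'' The transition matrix of a geodesic (or ShortLex/piling) automaton is essentially never irreducible: the initial state is transient, and already for a single generator the states recording ``last letter $a$'' and ``last letter $a^{-1}$'' do not communicate, since a geodesic cannot switch sign. So the assertion that connectivity of $\Gamma^{\rm op}$ lets you ``transition between any two states of $\mathcal{A}$'' is false for any reasonable automaton, and primitivity of $M$ itself is not the right target. What must actually be proved is the refined spectral statement: among the strongly connected components of $\mathcal{A}$ there is a unique one whose spectral radius equals $\lambda$, that component is aperiodic and is both reachable from the start state and co-reachable from an accept state, and no directed path in the condensation of $\mathcal{A}$ passes through two components of spectral radius $\lambda$ --- failing the last condition one gets $\calG(n) \asymp n^{s}\lambda^{n}$ with $s \geq 1$ rather than exact exponential growth (this is exactly what happens for reducible RAAGs with two factors of equal growth, as recorded after Theorem \ref{T:exact}). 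That refined statement is the entire content of the theorem, and your proposal defers it to \cite{GTT} rather than supplying it. An alternative, and arguably cleaner, route suggested by the machinery of Section \ref{S:growth-series} is analytic: write $1/G(t) = p(-2t/(1+t))$ with $p$ the clique polynomial of $\Gamma$ and show that the smallest positive root of the denominator is simple and is the unique root of minimal modulus; either way, the irreducibility of $\Gamma^{\rm op}$ has to be converted into a concrete spectral or root-separation statement, and that conversion is missing.
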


As a corollary of the previous theorem, one has also a precise characterization of the growth rate for reducible RAAGs. Namely, if $\Gamma^{op}$ is disconnected, 
let us denote as $\Gamma_1^{op}, \dots, \Gamma_r^{op}$ the connected components of $\Gamma^{op}$. Then one has 
$\mathbb{A}(\Gamma) = \mathbb{A}(\Gamma_1) \times \dots \times \mathbb{A}(\Gamma_r)$, hence if 
$\mathbb{A}(\Gamma)$ 
is not abelian it 
has exponential growth $\lambda > 1$ and
$$\calG(n) \sim n^{s-1} \lambda^n$$
where $s$ is the number of irreducible components $\mathbb{A}(\Gamma_i)$ of maximal growth rate. In particular, a RAAG has pure exponential growth 
if and only if is not abelian and it does not have two irreducible components of equal growth.

\begin{remark}
Note that a different, coarser definition of growth is also used often in the literature.
In particular, one may define two nondecreasing positive functions $f$ and $g$ to be equivalent $f \sim g$ if there is a constant $C$ such that
\begin{equation}\label{equivalence}
 f(n/C) \leq g(n) \leq C f(n).
\end{equation}
Then the (coarse) growth rate of the group $G$ can be defined as the corresponding equivalence class of the function $f(n) = \# B_{n}(G,S)$.
This definition is robust with respect to changing generators, but provides a much coarser classification. For instance, 
it does not distinguish between groups of exponential growth and of pure exponential growth, and neither it distinguishes between groups 
of exponential growth with different growth rates. As an example, it is known that the mapping class group has exponential growth, but 
it is not known whether it has pure exponential growth for, say, the standard Humphries generating set, and neither what the growth rate is. 
The definition we use in the present paper, on the contrary, is sensitive to the generating set, and provides much finer information, distinguishing e.g. 
between groups with growth rates, say, $2^n$, $n 2^n$, and $3^n$. 

Finally, let us remark that in this paper we focus on counting elements in the \emph{sphere} $S_n(G, S)$ rather than in the 
\emph{ball} $B_n(G, S)$. It is a simple observation that asymptotic results for counting in spheres are a priori stronger than counting in balls, 
as one gets the counting in balls by averaging over spheres.
\end{remark}

\subsection{Spherical growth series}

The growth series of a group encodes the number of elements of given length in a group as coefficients of a power series. 
If $\Sigma \subseteq G$ is a subset of a group, given a word metric $\Vert \cdot \Vert$ on $G$ we define the \emph{spherical growth series} as the generating function
$$\Sigma(t) := \sum_{g \in \Sigma} t^{\Vert g \Vert}.$$
In particular, if $\Sigma = G$ is the entire group, we can encode the growth of $G$ as coefficients in a power series,  and get 
\[
G(t) = \sum_{n = 0}^\infty \calG(n) t^{n}.
\] 
One can derive the generating function of direct and free products of groups from the generating functions of the respective groups as follows:
\begin{equation} \label{E:direct-prod}
\big ( G_{1} \times G_{2} \big) (t) = G_{1}(t) G_{2}(t)
\end{equation}

\begin{equation} \label{E:free-prod}
\frac{1}{\big ( G_{1} \star G_{2} \big) (t)}= \frac{1}{G_{1}(t)} + \frac{1}{G_{2}(t)} -1.
\end{equation}

\medskip
\textbf{Example.}  As an example, which will guide us through the rest of the argument, one immediately gets by looking at balls in $\mathbb{Z}$:
$$\mathbb{Z}(t) = 1 + 2 \sum_{k = 1}^\infty t^k = \frac{1+t}{1-t}$$
from which and \eqref{E:direct-prod} one gets 
$$\mathbb{Z}^2(t) = \left(\frac{1+t}{1-t}\right)^2.$$ 
Hence, from \eqref{E:free-prod}
\begin{equation} \label{E:Z2}
(\mathbb{Z}\star \mathbb{Z}^2)(t) = \frac{1}{\frac{1}{\mathbb{Z}(t)} + \frac{1}{\mathbb{Z}^2(t)} -1} = 
\frac{1}{ \frac{1-t}{1+t} + \left( \frac{1-t}{1+t} \right)^2 - 1  } =
\frac{(1+t)^2}{1-4t-t^2}
\end{equation}
so the convergence radius of $f_{\mathbb{Z}\star \mathbb{Z}^2}(t)$ equals $\sqrt{5}-2$, the modulus of the smallest root of $1 - 4t - t^2 = 0$.
Thus, the growth rate of $\mathbb{Z}\star \mathbb{Z}^2$ equals
$$\lambda(\mathbb{Z}\star \mathbb{Z}^2) = \frac{1}{\sqrt{5}-2} \cong 4.23\dots$$

\subsection{Special cube complexes}

Associated to a given right-angled Artin group $\A(\Gamma)$ is an infinite and locally finite cube complex called the \emph{Salvetti complex}, constructed as follows: associated to each vertex of $\A(\Gamma)$ is a simple closed loop of length 1. If two vertices form an edge in $\A(\Gamma)$ then attach to the two associated loops a square torus generated by the two loops intersecting at right angle. More generally, given a complete subgraph on $k$ vertices, consider a unit $k$-torus generated by $k$ loops intersecting at right angles. We then take the universal cover of this tori-complex, which is called  
the Salvetti complex associated to $\A(\Gamma)$ and denoted as $X_{\mathbb{A}(\Gamma)}$. Notice that the 0 and 1-skeleta of $X_{\mathbb{A}(\Gamma)}$ are isomorphic, respectively, to 
the 0 and 1-skeleton of $Cay(\A(\Gamma))$, the Cayley graph of $\A(\Gamma)$ with this specific presentation.

Given a right-angled Artin group $\A(\Gamma)$, its Salvetti complex $X_{\mathbb{A}(\Gamma)}$ is a \emph{special cube complex} \cite{Haglund}, which we discuss now.  A \emph{cube complex} is a polyhedral complex in which the cells
are Euclidean cubes of side length one. The attaching maps are isometries identifying the
faces of a given cube with cubes of lower dimension and the intersection of two cubes is a
common face of each \cite{Sag95}. 

Cubes of dimension 0, 1 and 2 are also referred to as vertices, edges and squares. 
A cube complex is finite dimensional if there is an upper bound on the dimension of its cubes.
In this paper, we consider the distance between two vertices to be the graph distance on the 1-skeleton of a cube complex. 

A special cube complex  \cite{Haglund} is a particularly well-behaved class of cube complexes (for details see \cite{Haglund}). A \emph{CAT(0) cube complex} is a cube complex in which the link of each vertex is a flag simplicial complex.
 A (geometric) \emph{hyperplane} $h$ is a partition of the vertex set of  $X_{\mathbb{A}(\Gamma)}$ into two sets such that each set of the partition lie in a path-connected subspace of $X_{\mathbb{A}(\Gamma)}$ which we shall refer to as a \emph{half-space}, and denote as $\{ h^{+}, h^{-} \}$. Two hyperplanes provide four possible half-space intersections; the hyperplanes
\emph{intersect} if and only if each of these four half-space intersections is non-empty. If two hyperplanes do not intersect, we say they are \emph{nested}.

Two vertices in a half-space are connected by an edge-path that does not \emph{cross} $h$ whereas a geodesic edge-path
connecting a vertex in one half-space to a vertex in the other half-space crosses $h$ exactly once. In the latter case
we say that $h$ \emph{separates} the two vertices. Similar we say a geodesic (in the edge metric) between two vertices \emph{cross} a hyperplane $h$ if there exists two consecutive vertices on the geodesic such that one belongs to $h^{+}$ and the other belongs to $h^{-}$.  



\subsection{Geometric excursion in right-angled Artin groups} \label{S:geom}

Given $X_{\A(\Gamma)}$ with the edge metric, let $\calH_{A}$ be the set of hyperplanes dual to $a$, the generator of $A$. Let $x_{0} \in X_{\A(\Gamma)}$ be the base point given by the identity element. Given an element $g \in \AG$, consider a geodesic $\gamma$ in $X_{\A(\Gamma)}$ connecting $x_{0}$ and $g x_{0}$. By the theory of CAT(0) cube complexes, $\gamma$ crosses each hyperplane at most once. Given any two hyperplanes $h_{1}$ and $h_{2}$, their intersection is either empty or not. If their intersection is empty,  we say $h_{1}$ and $h_{2}$ \emph{nest}. Let $\{h_{i}^{+}, h_{i}^{-} \}$ denote the two half-spaces of each hyperplane $h_{i}$. Let $h_{1}$ and $h_{2}$ be two nested hyperplanes, and let us assume, without loss of generality, that $h_1^+ \supset h_{2}^{+}$. If there exists another hyperplane $h_{3}$ such that 
$$ h_1^+  \supset h_{3}^{\epsilon} \supset h_{2}^{+},$$
for some $\epsilon \in \{ \pm \}$,  
then we say that $h_{1}$ and $h_{2}$ are not \emph{consecutive hyperplanes}. Otherwise, if there does not exist such a hyperplane $h_{3}$,  then we say that $h_{1}$ and $h_{2}$ are \emph{consecutive hyperplanes}. A \emph{nested sequence} of hyperplanes is a sequence of hyperplanes 
\[
h_{1}, h_{2},...,h_{k}
\]
for which there exists an assignment of pluses and minuses so that $h_{i}^{+} \supset h_{i+1}^{+}$ for all $i = 1, 2, ..., k$. A \emph{maximal nested sequence} is a nested sequence of hyperplanes that is not a proper subsequence of another nested sequence.

It can be shown that each hyperplane in $X_{\A(\Gamma)}$ is dual to a set of edges associated to some vertex in $\Gamma$. That is to say, we can label each hyperplane with the corresponding vertex in $\Gamma$. This association is not bijective, as in general there are many hyperplanes associated with the same vertex in $\Gamma$. 

Given a finite geodesic segment $\gamma$ and a vertex group $A = \langle a \rangle$, we denote as $H_{\ga}|_{A}$ the set of hyperplanes labelled by $a$ that $\gamma$ crosses. It follows from the definition that $H_{\ga}|_{A}$ is a nested sequence of $a$-hyperplanes. We say that another nested sequence of hyperplanes is \emph{consistent} with $H_{\ga}|_{A}$ if it contains $H_{\ga}|_{A}$ as a subsequence.

We now give a definition of excursion of an element in a vertex subgroup in terms of hyperplanes.

\begin{definition}
Given an element $g \in \AG$ and a vertex subgroup $A = \langle a \rangle$, we define the \emph{geometric excursion} of $g$ in $A$ to be the maximal 
cardinality of a set of consecutive $a$-hyperplanes which separate $x_0$ and $g x_0$. 
 \end{definition}

 
%

\section{Growth series of graph products of groups} \label{S:growth-series}

\subsection{Amalgamated products and admissible embeddings}
A key tool in this paper is the growth series of an \emph{amalgamated product} of groups. Let $f_{i} \from K \to G_{i}$ for $i = 1, 2$ be group homomorphisms. The amalgamated product $G_{1}\star_{K}G_{2}$ is defined as follows: let $N$
be the normal subgroup of $G_{1} \star G_{2}$ generated by elements of the form $f_{1}(h)f_{2}(h)^{-1}$ for $h \in K$; then
\[G_{1} \star_{K} G_{2} := (G_{1} \star G_{2})/N. \]
Note that the free product $G_{1} \star G_{2}$ can be expressed as the special case of the amalgamated product where $K$ is trivial. 
As a result, we can view right-angled Artin groups as amalgamated products where one amalgam is a vertex group. 
Let $\fG \Gamma$ be a graph product with defining graph $\Gamma$ (in particular, it may be a right-angled Artin group). 
Then by definition for any vertex $v$ of $\Gamma$ we have the decomposition 
\begin{equation} \label{E:decomp}
\fG \Gamma = (\fG A \times  \fG B) \star_{\fG B} \fG Z
\end{equation}
where $A$ is a vertex $v$, $B$ is the subgraph of $\Gamma$ whose vertex set is the set of vertices adjacent to $v$, and $Z$ the subgraph of $\Gamma$ with vertex set 
the set of all vertices different from $v$.

 
In order to compute the growth series for an amalgamated product of groups, we need an additional assumption of \emph{admissibility}, which we introduce now, 
following \cite{Alo}. 
Recall $(G, S)$ is a group $G$ with symmetric generating set $S$. A \emph{map of pairs} 
\[ \alpha \from (K, S_K) \to (G, S)\] is a homomorphism $\alpha \from K \to G$ such that $\alpha(S_{K}) \subseteq S$.
The map $\alpha$ is an 
\emph{inclusion} if $\alpha \from K \to G$ is a monomorphism. Given $\alpha$, the 
corresponding length functions $\ell_{K}$ and $\ell_{G}$ satisfies 
\[
\ell_{G} (\alpha (k)) \leq \ell_{K} (k)
\]
 for all $k \in K$. 
\begin{definition}\label{admissible}
An inclusion $\alpha: K \to G$ of groups is \emph{admissible} if there exists a set $U$ of right coset representatives of $G$ modulo $K$ such that 
$$\ell_G(ku) = \ell_K(k) + \ell_G(u)$$
for any $k \in K, u \in U$. 
The set $U$ will be called a \emph{complement} of $K$ in $G$. In this paper we will assume that the identity element $1$ belongs to $U$. This implies that $K$ isometrically embeds into $G$; that is, for all $k \in K$ 
 one has the equality
$$\ell_{G} (\alpha(k)) = \ell_{K} (k).$$ 
 \end{definition}
 
As a consequence, every element $x \in G$ can be written uniquely in the form \[x = ku\] where $k \in K$ and $u \in U$. That is to say, as a set, $G$ is in bijection with a direct product. Then by Equation (1), the growth series behaves well, that is we have
$$G(t) = K(t) U(t).$$
Now, consider the inclusions $G_{i}  \to G_{1} \star_K G_{2}$ in the amalgamated product, for $i = 1, 2$. It is shown in 
\cite{Alo} that the admissibility of $K$ in $G_{i}$ implies the admissibility of $G_{i}  \to G_{1} \star_K G_{2}$.

\begin{lemma}[\cite{Alo}, Lemma 3]\label{Alonso}
If the inclusions $K \to G_{1}$ and $K \to G_{2}$ are admissible, then so are the inclusions $G_{1}  \to G_{1} \star_K G_{2}$ and $G_{2}  \to G_{1} \star_K G_{2}$. 
\end{lemma}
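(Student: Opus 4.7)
The plan is to construct a complement $V$ of $G_1$ in $\bar G := G_1 \star_K G_2$ using the classical normal form for amalgamated products, and then to derive length additivity from a general ``geodesic equals normal form'' identity.

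Let $U_1 \subseteq G_1$ and $U_2 \subseteq G_2$ be the admissible complements of $K$ provided by hypothesis, each containing $1$. By the normal form theorem for amalgamated products, every $g \in \bar G$ has a unique expression $g = k \cdot u^{(1)} u^{(2)} \cdots u^{(n)}$ with $k \in K$ and with the $u^{(j)}$ alternating between $U_1 \setminus \{1\}$ and $U_2 \setminus \{1\}$. I would define
$$V := \{ 1 \} \cup \bigl\{ u^{(1)} u^{(2)} \cdots u^{(n)} : n \geq 1,\ u^{(1)} \in U_2 \setminus \{1\} \bigr\}.$$
The first step is to check that $V$ is a transversal for $G_1$-cosets: if the normal form of $g$ begins with $u^{(1)} \in U_1$, set $g_1 := k u^{(1)} \in G_1$ and $v := u^{(2)} \cdots u^{(n)} \in V$; otherwise set $g_1 := k \in K \subseteq G_1$ and $v := u^{(1)} \cdots u^{(n)} \in V$ directly. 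Uniqueness of the normal form gives uniqueness of the factorization $g = g_1 v$.

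The main content is the length identity $\ell_{\bar G}(g_1 v) = \ell_{G_1}(g_1) + \ell_{\bar G}(v)$. I would deduce it from the following general claim: for every $g \in \bar G$ in normal form $g = k \cdot u^{(1)} \cdots u^{(n)}$ with $u^{(j)} \in U_{i_j}$,
$$\ell_{\bar G}(g) = \ell_K(k) + \sum_{j=1}^n \ell_{G_{i_j}} \bigl( u^{(j)} \bigr).$$
Granting the claim, write $g_1 = k u_0$ with $u_0 \in U_1$. Admissibility of $K \hookrightarrow G_1$ yields $\ell_{G_1}(g_1) = \ell_K(k) + \ell_{G_1}(u_0)$, and because $v$ is either trivial or has normal form starting in $U_2$, the concatenation $k \cdot u_0 \cdot (\text{normal form of } v)$ is itself the normal form of $g_1 v$ (with trivial factors dropped); applying the claim to $g_1 v$ and to $v$ then gives the desired additivity.

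To prove the claim, the inequality $\leq$ follows by concatenating geodesic expressions for the individual factors, each written in generators of $\bar G$. For $\geq$, I would take a geodesic of $g$ in $S_1 \cup S_2$ and group consecutive letters from the same $S_i$ into maximal syllables, obtaining $g = y_1 \cdots y_m$ with $y_j \in G_{i'_j}$ alternating and $\ell_{\bar G}(g) = \sum_j \ell_{G_{i'_j}}(y_j)$. I would then reduce this expression to normal form without increasing the length sum by iterating admissibility: split each $y_j = k_j u_j$ with $u_j \in U_{i'_j}$, paying $\ell_{G_{i'_j}}(y_j) = \ell_K(k_j) + \ell_{G_{i'_j}}(u_j)$, and absorb $k_j$ into the adjacent syllable in $G_{i'_{j-1}}$ via the isometric inclusion $K \hookrightarrow G_{i'_{j-1}}$ and the triangle inequality, possibly merging or eliminating syllables along the way. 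The main obstacle is making this reduction algorithm precise: one must maintain an invariant (for example, a lexicographic measure combining the number of syllables and the remaining total $K$-mass to be absorbed) to show the process terminates and never inflates the length sum, so that uniqueness of the normal form certifies that the fully reduced sum equals $\ell_K(k) + \sum_j \ell_{G_{i_j}}(u^{(j)})$.
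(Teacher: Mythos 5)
The paper gives no proof of this lemma at all --- it is quoted verbatim from Alonso with a citation --- so the only comparison available is with Alonso's original argument, which your sketch essentially reconstructs. Your high-level plan is the right one and matches the standard route: build the transversal $V$ of $G_1$ in $\bar G$ from normal forms beginning in $U_2$, and reduce length-additivity to the identity $\ell_{\bar G}(g)=\ell_K(k)+\sum_j\ell_{G_{i_j}}(u^{(j)})$. That identity is precisely the content of the paper's Proposition~\ref{normalform} (Alonso's Proposition 4), and your derivation of the lemma from it --- including the observation that $k\cdot u_0\cdot(\text{normal form of }v)$ is already the normal form of $g_1v$, so uniqueness gives both the transversal property and the additivity --- is correct and complete.

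The genuine gap is in the lower bound of the claim, and you have named it yourself without closing it. The difficulty is worse than a missing termination invariant: when you split a syllable $y_j=k_ju_j$ and absorb $k_j$ into $y_{j-1}$, the new syllable $y_{j-1}k_j$ must itself be re-split, so a fresh $K$-element is created one position to the left and the absorption cascades through the entire word; moreover, whenever some $u_j$ becomes trivial, the neighbouring syllables $u_{j-1}$ and $u_{j+1}$ lie in the \emph{same} factor and must be merged and re-processed, which breaks the alternation you are inducting on. A correct write-up needs a genuine double induction --- on the number of syllables, and for a fixed syllable count on the position of the travelling $K$-element --- together with the two inequalities that make each elementary move non-increasing: $\ell_{G_i}(yk)\le\ell_{G_i}(y)+\ell_K(k)$ (isometric embedding of $K$ plus the triangle inequality) and $\ell_K(k')+\ell_{G_i}(u')=\ell_{G_i}(k'u')$ (admissibility) for the re-splitting. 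Note also that you cannot sidestep the claim: even the easy inequality $\ell_{\bar G}(g_1v)\le\ell_{G_1}(g_1)+\ell_{\bar G}(v)$ secretly uses $\ell_{\bar G}(g_1)=\ell_{G_1}(g_1)$, i.e.\ the isometric embedding of $G_1$ in $\bar G$, which is itself part of what is being proved. As it stands your argument relocates the hard content into an admitted sketch rather than proving it.
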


Using these techniques, we derive a normal form for any $x \in G$ as follows. 

Let $(K, S_K), (G_1, S_1), (G_2, S_2)$ be groups with their generating sets, and suppose that the maps $K \to G_1$ and $K \to G_2$ are admissible. Let $U_{1}, U_{2}$ be complements of $K$ in $G_{1}, G_{2}$ as in Definition~\ref{admissible}. Consider the amalgamated product
$$(G,S) = (G_{1}, S_{1}) \star_{(K, S_{K})} (G_{2}, S_{2}).$$
The following is a rephrasing of (\cite{Alo}, Lemma 3 and Proposition 4):

\begin{proposition}\label{normalform}
Every element $g \in G$ can be represented in a unique way as 
\begin{equation} \label{E:normal}
g = k u_{i_{1}} u_{i_{2}} \dots u_{i_{n}} 
\end{equation}
with $k \in K$, $u_{i_{j}} \in U_{i_{j}} \setminus \{1\}$, $i_{j}
 \in \{1, 2 \},\text{ and } i_{1} \neq i_{2} \neq ...\neq i_{n}$.
Moreover, its word length satisfies
\begin{equation} \label{E:length}
 \ell_{G}(g) = \ell_{K} (k) + \sum_{j =1}^{n} \ell_{G_{i_{j}}} (u_{i_{j}}).
\end{equation}
\end{proposition}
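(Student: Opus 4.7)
The plan is to separate the claim into a combinatorial part (existence and uniqueness of the normal form) and a metric part (the length formula), and handle both using the admissibility hypothesis together with Lemma~\ref{Alonso}.

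For existence, I would start from the fact that every element of $G$ is represented by some alternating word $g = g_1 g_2 \cdots g_m$ with $g_j \in G_{i_j}$ and $i_j \neq i_{j+1}$. Using the decomposition $G_i = K \cdot U_i$ supplied by admissibility of $K \hookrightarrow G_i$, write each $g_j = k_j w_j$ with $k_j \in K$ and $w_j \in U_{i_j}$. Then I would push $K$-factors to the left one step at a time: for each adjacent pair $w_j k_{j+1}$, note that $k_{j+1} \in K \subseteq G_{i_j}$, so $w_j k_{j+1} \in G_{i_j}$ admits a new decomposition $\tilde k_{j+1} \tilde w_j$. Repeating collects all $K$-contributions into a single leftmost $k$. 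If along the way some intermediate $w_j$ becomes trivial, merge the two neighboring $G$-factors into a single one and restart; termination follows since the number of $w$-letters strictly decreases at each merge. Uniqueness I would obtain via a Britton-type argument on the Bass--Serre tree of the splitting $G_1 \star_K G_2$: two alleged normal forms for the same element trace out the same geodesic from the basepoint, and reading off the edges together with the edge-stabilizer cosets forces identical indices $i_j$, identical coset representatives, and identical leading $k$.

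For the length formula, I would induct on $n$. By Lemma~\ref{Alonso}, each inclusion $G_i \hookrightarrow G$ is admissible; let $V_i$ denote the corresponding complement. The key structural fact, implicit in the construction of~\cite{Alo}, is that $V_i$ is precisely the set of normal forms with trivial leading $K$-part whose first letter lies in $U_{3-i}$. Granted this, given $g = k u_{i_1} u_{i_2} \cdots u_{i_n}$ with, say, $i_1 = 1$, one has $k u_{i_1} \in G_1$ and $v := u_{i_2} \cdots u_{i_n} \in V_1$, so
\[
\ell_G(g) = \ell_{G_1}(k u_{i_1}) + \ell_G(v) = \ell_K(k) + \ell_G(u_{i_1}) + \ell_G(v),
\]
where the last equality uses admissibility of $K \hookrightarrow G_1$ together with the isometric embedding $G_1 \hookrightarrow G$. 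Applying the inductive hypothesis to $v$, now viewed through the admissibility of $G_2 \hookrightarrow G$, closes the step.

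The main obstacle is the identification of $V_i$ with the set of tail normal forms beginning in $U_{3-i}$; this is the substantive content of Alonso's proof. Once one verifies that these alternating tails form a transversal of the cosets of $G_i$ in $G$ and that the length adds across the $G_i$/tail split, both the existence argument and the length induction become routine bookkeeping, while uniqueness follows from the standard Bass--Serre argument.
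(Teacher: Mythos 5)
Your proposal is correct in outline, but note that the paper does not actually prove this proposition: it states it as ``a rephrasing of (\cite{Alo}, Lemma 3 and Proposition 4)'' and defers entirely to Alonso. What you have written is essentially a reconstruction of Alonso's argument, so the comparison is between your sketch and the cited source rather than with anything in the paper. Your existence argument (pushing $K$-factors leftward through the transversal decompositions $G_i = K\cdot U_i$, merging syllables when a $w_j$ trivializes) and your uniqueness argument (Britton/Bass--Serre) are the standard normal form theorem for amalgamated products with chosen coset transversals, and they are fine. For the length formula you correctly isolate the crux: one must know that the set $V_i$ of alternating tails beginning in $U_{3-i}$ is not merely \emph{a} transversal of $G_i$ in $G$ but an \emph{admissible complement} in the sense of Definition~\ref{admissible}, i.e.\ that $\ell_G(g_i v) = \ell_{G_i}(g_i) + \ell_G(v)$ holds across that split. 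Lemma~\ref{Alonso} as quoted only asserts that \emph{some} admissible complement exists, so your induction is not self-contained until that identification is made; this is exactly the content of Alonso's Lemma~3, and you are right to flag it as the substantive step rather than routine bookkeeping. Granting it, your induction on the number of syllables (peeling off $k u_{i_1} \in G_1$ and applying the hypothesis to the tail $v \in V_1$, using $\ell_{G_1}(k u_{i_1}) = \ell_K(k) + \ell_{G_1}(u_{i_1})$ from admissibility of $K \hookrightarrow G_1$) closes correctly. In short: your route is more explicit than the paper's (which is a citation), it is consistent with what the cited results provide, and the only unproven ingredient is precisely the one the paper also takes from \cite{Alo}.
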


The proposition allows us to obtain the growth series of $G = G_1 \star_K G_2$, under the admissibility hypothesis. 
In fact, as a consequence of the proposition, we have the bijection 
$$G \cong K \times (U_1 \star U_2)$$
hence, since the above decomposition behaves nicely with respect to length, by equation \eqref{E:direct-prod} and equation \eqref{E:free-prod}, 
\begin{equation}\label{generalformforamalgam}
\frac{1}{G(t)} = \frac{1}{K(t)} \left( \frac{1}{U_1(t)} + \frac{1}{U_2(t)} - 1 \right) = \frac{1}{G_1(t)} + \frac{1}{G_2(t)} - \frac{1}{K(t)}
\end{equation}
as claimed in [\cite{Alo}, Theorem 2].

\medskip
Let us now apply this to graph products. Let $\fG \Gamma$ be a graph product with defining graph $\Gamma$. By (\cite{Chi}, Lemma 1), the inclusion of $\fG \Gamma'$ into $\fG \Gamma$ where $\Gamma'$ is an induced subgraph of $\Gamma$ is always admissible. Let us fix a vertex $v$ of $\Gamma$, and let $A, B, Z$ as in \eqref{E:decomp}. 
Applying the results and Proposition \ref{normalform} to the decomposition $(\fG A \times \fG B) \star_{\fG B} \fG Z$,
we conclude that all inclusions 
$$\xymatrix{ & \fG  A \times \fG B \ar[dr] & \\
\fG B \ar[ur] \ar[dr] & & (\fG A \times \fG B) \star_{\fG B} \fG Z \\
&\fG Z \ar[ur] & \\ }$$ are admissible and one can take $\fG A$ as a complement of $\fG B$ in $\fG A \times \fG B$. Let us now choose a complement of $\fG B$ in $\fG Z$
and denote it as $T$. Then, the normal form for any $g \in \A(\Gamma)$ is 
\begin{equation} \label{E:norm-A}
g_{\rm norm} := b a_{1} t_{1} a_{2} t_{2}\dots a_k t_k 
\end{equation}
where $b \in \fG B$, $a_{i} \in \fG A$ and $t_{i} \in T$, with $a_i \neq 1$ for $i > 1$ and $t_i \neq 1$ for $i < k$.  

\begin{remark}\label{inbetween}
By construction, in $g_{\rm norm}$ every subword $t_{i}$ contains a generator of $\A(\Gamma)$ that does not commute with $a \in \fG A $.
\end{remark}

In the case of a graph product $\fG \Gamma$ with defining graph $\Gamma$ and vertex groups $\{ G_v \}_{v \in V}$, we get by induction the formula (\cite{Chi}, Proposition 1)
\begin{equation} \label{E:gen-fun-noexc}
\frac{1}{\fG \Gamma(t)} = \sum_{\Delta} \prod_{v \in \Delta} \left(\frac{1}{G_v(t)}-1 \right) 
\end{equation}
where the sum is over all complete subgraphs $\Delta$ of $\Gamma$.

\medskip
\textbf{Example.} Let us consider the right-angled Artin group $\fG \Gamma = \mathbb{Z} \star \mathbb{Z}^2$. Then its defining graph $\Gamma$ has $3$ vertices and $1$ edge, and all its vertex groups isomorphic to $\mathbb{Z}$. Hence 
$$\frac{1}{\fG \Gamma(t)} = 1 + 3 \left( \frac{1}{\mathbb{Z}(t)} - 1\right) + \left( \frac{1}{\mathbb{Z}(t)} - 1\right)^2 = \frac{1 -4t - t^2}{(1+t)^2}$$
as we previously obtained. In general, note that as a corollary of formula \eqref{E:gen-fun-noexc} if $G = \mathbb{A}(\Gamma)$ is a RAAG, then 
$$\frac{1}{G(t)} = p\left( \frac{-2t}{1+t} \right)$$
where $p$ is the clique polynomial of $\Gamma$ (\cite{Chi}).

\subsection{Excursion in graph products}

Let $\fG$ be a graph product, and let $v \in \Gamma$ be a vertex. Recall the decomposition 
$$\fG \Gamma = (\fG A \times \fG B) \star_{\fG B} \fG Z$$
where $A$ is the vertex $v$, $B$ is the subgraph consisting of all vertices which are adjacent to $A$, and $Z$ the subgraph of all vertices different from $A$.

Let $g \in \fG \Gamma$, and recall the normal form from eq. \eqref{E:norm-A}. We define the excursion of $g$ in the vertex group of $v$ as 
the maximal length of the words $a_i \in \fG A$ in the normal form of $g$: 
$$\mathcal{E}_v(g) := \max_{1 \leq i \leq k} \Vert a_i \Vert.$$

We now derive the growth series for the subset of a graph product where the excursion in a given vertex group $G_v$ is 
bounded above by some constant $E$.  

\begin{proposition} \label{P:formula}
Let $\fG \Gamma$ be a graph product, let $v$ be a vertex, and let $\calE_v(g)$ denote the excursion of $g$ in the vertex group of $v$. 
Define for any positive integer $E \in \N$
$$\fG \Gamma_{E} := \{ g \in \fG \Gamma \ : \ \calE_v(g) \leq E \}.$$
Then its growth series satisfies
\begin{equation} \label{E:gen-fun}
\frac{1}{\fG \Gamma_E(t)} = \sum_{\Delta'} \prod_{w \in \Delta '} \left(  \frac{1}{G_w(t)} - 1 \right) \left(  \frac{1}{\fG A_E(t)} - 1 \right)  + \sum_{\Delta''} \prod_{w \in \Delta''} \left(  \frac{1}{G_w(t)} - 1 \right)
\end{equation}
where $G_w$ is the vertex group of $w$, $\Delta'$ runs through all complete subgraphs of $B$, and $\Delta''$ runs through all complete subgraphs of $Z$. 
\end{proposition}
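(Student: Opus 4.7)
The plan is to adapt the derivation of the unrestricted growth series \eqref{E:gen-fun-noexc} by tracking the ``$\fG A$-letters'' of the normal form and imposing the excursion bound on each of them separately.

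First, I would apply the amalgamated decomposition $\fG \Gamma = (\fG A \times \fG B) \star_{\fG B} \fG Z$ and the normal form \eqref{E:norm-A}, writing every $g \in \fG \Gamma$ uniquely as
$$g = b\, a_1 t_1 a_2 t_2 \cdots a_k t_k, \qquad b \in \fG B,\ a_i \in \fG A,\ t_i \in T,$$
with the non-triviality constraints described there, where $T$ is a complement of $\fG B$ in $\fG Z$ provided by admissibility. By the definition of excursion given immediately before the statement, $\calE_v(g) = \max_i \|a_i\|$, so $g \in \fG \Gamma_E$ if and only if each $a_i$ lies in the finite set $\fG A_E := \{a \in \fG A : \|a\| \leq E\}$.

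Next, the additivity of word length along the normal form \eqref{E:length} yields a length-preserving bijection
$$\fG \Gamma_E \longleftrightarrow \fG B \times \mathrm{Alt}(\fG A_E, T),$$
where $\mathrm{Alt}(X,Y)$ denotes the set of (possibly empty) words alternating between $X \setminus \{1\}$ and $Y \setminus \{1\}$. The growth series of the alternating-word factor satisfies the free-product-type identity
$$\frac{1}{\mathrm{Alt}(\fG A_E, T)(t)} = \frac{1}{\fG A_E(t)} + \frac{1}{T(t)} - 1,$$
derived exactly as in \eqref{E:free-prod}, since that derivation only uses the bijective and length-additive alternating structure and not any group operation on the letters. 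Combined with $\fG Z(t) = \fG B(t) \cdot T(t)$ (coming from admissibility of $\fG B \hookrightarrow \fG Z$), this gives
$$\frac{1}{\fG \Gamma_E(t)} = \frac{1}{\fG B(t)}\left(\frac{1}{\fG A_E(t)} - 1\right) + \frac{1}{\fG Z(t)}.$$

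Finally, applying the unrestricted formula \eqref{E:gen-fun-noexc} to the graph products $\fG B$ and $\fG Z$ — whose defining graphs are the induced subgraphs $B$ and $Z$ of $\Gamma$ — expands $\frac{1}{\fG B(t)}$ and $\frac{1}{\fG Z(t)}$ as sums over complete subgraphs $\Delta' \subset B$ and $\Delta'' \subset Z$ respectively, and substitution yields the claimed formula \eqref{E:gen-fun}. There is no serious obstacle here; the only subtlety worth flagging is that $\fG A_E$ is merely a subset rather than a subgroup of $\fG A$, but the amalgamated-product identities we invoke treat complements purely as sets of coset representatives equipped with an additive length function, so further restricting to $\fG A_E$ preserves the structure that the formula requires.
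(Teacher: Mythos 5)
Your proposal is correct and follows essentially the same route as the paper: the amalgamated decomposition $\fG \Gamma = (\fG A \times \fG B) \star_{\fG B} \fG Z$, the length-preserving bijection of $\fG \Gamma_E$ with $\fG B$ times the set of alternating words in $\fG A_E$ and a complement of $\fG B$ in $\fG Z$, the free-product-type identity for that alternating factor, and finally Chiswell's expansion of $1/\fG B(t)$ and $1/\fG Z(t)$ over complete subgraphs. Your remark that $\fG A_E$ is only a set (so the ``free product'' identity must be read at the level of alternating words with additive length) is exactly the point the paper flags by noting its bijection is not a group homomorphism.
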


\textbf{Example.} Let us consider the group $\fG \Gamma= \mathbb{Z}\star \mathbb{Z}^2$, where $\Gamma$ has three vertices and one edge;  
let $v$ be the isolated vertex in $\Gamma$, and let $\fG A = \langle v \rangle \cong \mathbb{Z}$ the cyclic group generated by $v$. 
Then the graph $B$ is empty, $Z$ is the complete graph on two vertices, and all vertex groups are $G_w \cong \mathbb{Z}$. Then the set of elements in $\fG \Gamma$ which have excursion $\leq E$ in the vertex group $\fG A$ is given, according to the formula \eqref{E:gen-fun} above, by 
$$\frac{1}{(\mathbb{Z}\star \mathbb{Z}^2)_E (t)} = 1 \cdot \left( \frac{1}{\mathbb{Z}_E(t)} - 1\right) + 1 + 2 \left( \frac{1}{\mathbb{Z}(t)} - 1\right) + \left( \frac{1}{\mathbb{Z}(t)} - 1\right)^2.$$
Now, it is easy to check that 
$$\mathbb{Z}_E(t) = 1 + 2 \sum_{k = 1}^E t^k = \frac{1+t - 2t^{E+1}}{1-t} \qquad\textup{and } \mathbb{Z}(t) = \frac{1+t}{1-t}$$
hence, after some simplification, 
$$(\mathbb{Z}\star \mathbb{Z}^2)_E (t) = \frac{(1+t)^2 (1+t - 2t^{E+1})}{(1-4t-t^2)(1+t) + 8 t^{E+2}}.$$
Note in particular that as $E \to \infty$ we obtain the growth series of $\mathbb{Z} \star \mathbb{Z}^2$ as in equation \eqref{E:Z2}.


\begin{proof}
Let $A, B, Z$ as above. Since $\fG \Gamma = G_1 \star_{\fG B} G_2$ where $G_1 = \fG A \times \fG B$ and $G_2 = \fG Z$, then 
we can take $A = U_1$ as complement of $\fG B$ in $G_1$, and let $U_2$ be a complement of $\fG B$ in $\fG Z$. 
Consider an element $x \in  \fG \Gamma$: this has normal form 
$$x = b u_{i_1} \dots u_{i_k}$$
where $b \in \fG B$, $u_{i_j} \in U_{i_j} \setminus \{1 \}$, and $i_j \neq i_{j+1}$. 
Then its image in $\fG A \star \fG Z$ is
$$p(x) = u_{i_1} \dots u_{i_k}$$
as $p(b) = 1$ by definition. 
Hence one has the bijection (note that this is not a group homomorphism!)
$$\fG \Gamma_E \to \fG B \times (\fG A_E \star U_2).$$
Hence, if we fix an upper bound $E$ on the excursion, 
the generating function for the set of elements of $G\Gamma$ with excursion $\leq E$ satisfies
$$\frac{1}{\fG \Gamma_E(t)} = \frac{1}{\fG B(t)} \left( \frac{1}{\fG A_E(t)} + \frac{1}{U_2(t)} - 1\right) = \frac{1}{\fG B(t)} \left( \frac{1}{\fG A_E(t)} - 1 \right) + \frac{1}{\fG Z(t)} $$
and by using \cite{Chi} and the fact that $\fG Z(t) = \fG B(t) U_2(t)$ we have 
$$ = \sum_{\Delta'} \prod_{w \in \Delta '} \left(  \frac{1}{G_w(t)} - 1 \right) \left(  \frac{1}{\fG A_E(t)} - 1 \right)  + \sum_{\Delta''} \left(  \frac{1}{G_w(t)} - 1 \right)
$$
as desired. 
\end{proof}

\subsection{Equivalence of definition of excursions in RAAGs}

We now focus on the special case of right-angled Artin groups, and show that the three definition of excursion given coincide. 

\begin{definition}
The \emph{algebraic excursion} of an element of $g \in \A(\Gamma)$ in a vertex group $\fG A = \langle a \rangle$ is the maximal number of consecutive $a$'s in any 
geodesic word representing $g \in \A(\Gamma)$. 
\end{definition}
It is clear from the definition that this is a special case of the definition of the excursion given in the introduction. We now prove the following equivalence.

\begin{proposition}\label{equal}
Let $\A(\Gamma)$ be a right-angled Artin group with a vertex group $\fG A = \langle a \rangle$. 
Given $g \in \A(\Gamma)$, let $g_{\rm norm}$ denote the normal form of $g$ as in Definition \ref{normalform}. The following are equal:
\begin{enumerate}
\item the maximal number of consecutive $a$'s in $g_{\rm norm}$;
\item the algebraic excursion of $g$ in $\fG A$;
\item the geometric excursion of $g$ in $\fG A$.
\end{enumerate}
\end{proposition}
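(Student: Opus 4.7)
The strategy is to prove the cycle of inequalities $(1) \leq (2) \leq (3) \leq (1)$, using the geometric definition as a bridge. The first inequality is immediate: by Proposition~\ref{normalform}, writing each $t_i$ as a geodesic word in $\fG Z$ makes $g_{\rm norm} = b\, a_1 t_1 \cdots a_k t_k$ itself a geodesic word for $g$, so the longest block of consecutive $a$'s in $g_{\rm norm}$ is bounded above by the supremum taken in~(2).

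For $(2) \leq (3)$, I invoke two standard facts about the Salvetti complex $X_{\A(\Gamma)}$ viewed as a CAT(0) cube complex: any geodesic edge-path from $x_0$ to $g x_0$ crosses each hyperplane at most once and crosses nested hyperplanes in their nesting order; and two hyperplanes are transverse if and only if their $\Gamma$-labels commute, so in particular any two $a$-hyperplanes are nested. Now fix a geodesic word $w = s_1 \cdots s_n$ for $g$ with a block of $m$ consecutive $a$-letters at positions $i+1,\dots,i+m$, and let $h_1,\dots,h_m$ be the $a$-hyperplanes they cross, ordered by nesting. If some $h'$ were nested strictly between a pair $h_l, h_{l+1}$, then $h'$ would separate $x_0$ from $g x_0$ and hence be crossed by $w$; moreover by the nesting-order property the crossing of $h'$ would have to occur at a position strictly between $i+l$ and $i+l+1$, which is absurd. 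Hence $h_1,\dots,h_m$ is a set of $m$ consecutive $a$-hyperplanes separating $x_0$ and $g x_0$, giving $(2) \leq (3)$.

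For $(3) \leq (1)$, let $h_1,\dots,h_m$ be a set of $m$ consecutive $a$-hyperplanes separating $x_0$ from $g x_0$, ordered by nesting. They are crossed in this order by every geodesic word, in particular by $g_{\rm norm}$, and each $h_l$ is crossed by a unique $a$-letter lying in a unique block $a_{j(l)}$ of the normal form. I claim $j(1) = \dots = j(m)$; otherwise some $j(l) < j(l+1)$, so the subword $t_{j(l)}$ sits strictly between the corresponding $a$-letters in $g_{\rm norm}$ and, by Remark~\ref{inbetween}, contains a generator $s$ that does not commute with $a$. The $s$-hyperplane crossed inside $t_{j(l)}$ is then not transverse to the $a$-hyperplanes (its label does not commute with $a$), so it is nested with all of them; being crossed at a position strictly between those of $h_l$ and $h_{l+1}$, it must be nested strictly between them, contradicting their consecutiveness. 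Therefore $m \leq |a_{j(1)}|$, which is at most the quantity in~(1).

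The main technical content of both non-trivial steps is the CAT(0) cube complex dictionary that converts adjacency of crossings along a geodesic word into nesting-adjacency of the corresponding hyperplanes and vice versa; once this is in hand, Remark~\ref{inbetween} provides exactly the obstruction needed to prevent two distinct $a_i$-blocks from contributing to a single chain of consecutive $a$-hyperplanes.
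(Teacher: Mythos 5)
Your proof is correct and follows essentially the same route as the paper's: the cycle $(1)\leq(2)\leq(3)\leq(1)$, with $(2)\leq(3)$ obtained by passing from a block of consecutive $a$-letters to the dual hyperplanes, and $(3)\leq(1)$ obtained by showing via Remark~\ref{inbetween} that a generator not commuting with $a$ lying between two $a$-blocks of $g_{\rm norm}$ would produce a hyperplane nested strictly between the two $a$-hyperplanes, contradicting consecutiveness. Your write-up is in fact slightly more careful than the paper's in the $(2)\leq(3)$ step, where you spell out why no hyperplane can be nested strictly between hyperplanes dual to adjacent edges of a geodesic.
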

\begin{proof}
$(1) \leq (2)$: (2) is maximized over all reduced words representing $g$, while $(1)$ is counting the number of consecutive $a$'s in $g_{\rm norm}$, thus $(1) \leq (2)$. 

$(2) \leq (3)$: Let $w$ be a geodesic word which represents $g$ and realizes the algebraic excursion. Then any sequence of consecutive $a$'s in the word $w$
gives rise to a sequence of consecutive dual hyperplanes which separate $x_0$ and $g x_0$, which proves $(2) \leq (3)$. 

$(3) \leq (1)$: Given a sequence $H$ of nested, consecutive $a$-hyperplanes that separate $x_0$ and $g x_0$ and realizes the maximum in (3), consider the
geodesic path $\gamma$ between $x_{0}$ and $g x_0$ that represents the normal form $g_{\rm norm}$. Let us consider the sequence $H_{\ga}|_{A}$ 
of $a$-hyperplanes which are dual to $\gamma$. 
Since each hyperplane separates the space, and half-spaces are convex, it follows that $H_{\ga}|_{A}$ contains $H$. \\
\noindent \textbf{Claim}: Any two consecutive $a$-hyperplanes $h_{1}$ and $h_{2}$ in $H_{\ga}|_{A}$ belong to the same subword $a_i$ in $g_{\rm norm}$ (same notation as in eq. \eqref{E:norm-A}).
\subsubsection*{Proof of Claim:} otherwise by Remark~\ref{inbetween}, there exists, in between the two letters corresponding to $h_1$ and $h_2$, a letter that is a generator in $Z \setminus E$ and does not commute with $a$. 
This implies that there exists a hyperplane $h$ that is nested and in between $h_{1}$ and $h_{2}$, which contradicts the choice that $h_{1}$ and $h_{2}$ are consecutive. 
It follows from the claim that $(3) \leq (1)$.
\end{proof}

Thus, we define the excursion of an element $g$ in a subgroup generated by a vertex $v$, denoted as $\calE_{v}(g)$, in any of the three equivalent ways given by 
Proposition~\ref{equal}

\section{Proof of the main theorem} \label{S:proof}

Let us now turn to the proof of Theorem \ref{T:main}. First of all, we need a few results on families of generating functions which depend on a parameter. 

\subsection{Growth of sequences and generating functions}
\begin{proposition} \label{P:gen-funE}
For each $E \in \mathbb{N} \cup \{ \infty \}$, let 
$$f_E(t) := \frac{P_E(t)}{Q_E(t)}$$  
be a sequence of rational functions. Let
$$f_E(t) = \sum_{n = 0}^\infty a_{n, E} \ t^n$$
be the Taylor expansion of $f_E(t)$ at the origin, and assume that $a_{n, E} \geq 0$ for all $E, n$. 
Moreover, assume that: 
\begin{enumerate}
\item
$$P_E(t) \to P_\infty(t) \qquad \textup{ and } \qquad Q_E(t) \to Q_\infty(t)$$ 
uniformly on compact subsets of $\{t\in \mathbb{C} \ : \  |t| < 1 \}$. 
\item
The limit function $Q_\infty(t)$ has only one root of smallest modulus at $t = r < 1$ and this root is simple. 
\item 
The numerator $P_\infty(t)$ satisfies $P_\infty(r) \neq 0$.  
\end{enumerate}
Then: 
\begin{enumerate}
\item if we denote as $\lambda_E^{-1}$ the smallest root of $Q_E(t)$, then 
$$\lambda_E \to \lambda$$
\item
there exists $C > 0, E_0,$ and $n_0$ such that for any $E \geq E_0$ and any $n \geq n_0$ 
$$C^{-1} \leq \frac{a_{n, E}}{(\lambda_E)^n} \leq C.$$
\item
Moreover, there exists $c > 0$ such that 
$$\lim_{\min\{n, E \}\to \infty} \frac{a_{n, E}}{(\lambda_E)^n} = \lim_{n \to \infty} \frac{a_n}{\lambda^n} = c.$$
\end{enumerate}

\end{proposition}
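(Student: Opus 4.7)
The strategy is the classical principle of analytic combinatorics: the asymptotics of the $n$-th Taylor coefficient are controlled by the smallest singularity, and we just need to make all estimates uniform in the parameter $E$.

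\textbf{Step 1 (convergence of the dominant root).} Using that zeros of analytic functions are discrete, I would pick $r'' \in (r,1)$ small enough that $Q_\infty$ has no zero in $\{|t| \leq r''\} \setminus \{r\}$; such $r''$ exists by hypothesis (2). Since $Q_E \to Q_\infty$ uniformly on the closed disk $\{|t| \leq r''\}$, Hurwitz's theorem implies that for all $E$ sufficiently large $Q_E$ has exactly one zero $\lambda_E^{-1}$ in this disk (necessarily simple, since the limiting zero is), and $\lambda_E^{-1} \to r$. This proves part~(1).

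\textbf{Step 2 (partial fractions with uniform control).} Factor $Q_E(t) = (1 - \lambda_E t)\,\widetilde Q_E(t)$. Then $\widetilde Q_E$ is nonvanishing on $\{|t| \leq r''\}$ for large $E$, converges uniformly there to $\widetilde Q_\infty = Q_\infty(t)/(1-\lambda t)$, and $\widetilde Q_\infty$ is also nonvanishing on $\{|t| \leq r''\}$. Consequently $P_E/\widetilde Q_E \to P_\infty/\widetilde Q_\infty$ uniformly on this disk and is uniformly bounded. Setting
\[
A_E := \frac{P_E(1/\lambda_E)}{\widetilde Q_E(1/\lambda_E)},
\]
so that $A_E \to A := P_\infty(r)/\widetilde Q_\infty(r) \neq 0$ by hypothesis (3), I would write
\[
f_E(t) = \frac{A_E}{1 - \lambda_E t} + h_E(t),
\]
where $h_E$ is analytic on $\{|t|\leq r''\}$ with $\|h_E\|_\infty \leq M$, a bound uniform in $E \geq E_0$.

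\textbf{Step 3 (coefficient asymptotics).} Cauchy's inequalities applied to $h_E$ give Taylor coefficients $|b_{n,E}| \leq M\,(r'')^{-n}$, so comparing with the geometric expansion of $A_E/(1-\lambda_E t)$ yields
\[
\frac{a_{n,E}}{\lambda_E^{\,n}} = A_E + O\!\left( (\lambda_E r'')^{-n}\right).
\]
Since $\lambda_E r'' \to \lambda r'' > 1$, the error tends to $0$ uniformly in $E \geq E_0$. Choosing $E_0$ and $n_0$ large enough that $A_E \in [A/2, 2A]$ and the error is at most $A/4$ yields claim~(2); letting $\min\{n,E\} \to \infty$ gives claim~(3) with $c = A$.

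\textbf{Main obstacle.} The delicate step is the \emph{uniform} bound on $h_E$ in Step~2: uniform convergence of $Q_E$ to $Q_\infty$ on compact subsets of the open unit disk is, a priori, too weak to control the remainder after removing the pole. The factorization $Q_E = (1-\lambda_E t)\widetilde Q_E$, combined with nonvanishing of $\widetilde Q_\infty$ on the \emph{closed} disk of radius $r''$ (which is where the choice of $r''$ in Step~1 is used), is what converts pointwise information into the required uniform bound and makes the whole argument go through.
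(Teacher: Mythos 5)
Your proposal is correct and follows essentially the same route as the paper: isolate the simple dominant pole via Hurwitz/Rouch\'e on a disk of radius strictly between $r$ and $1$, use uniform convergence of $P_E, Q_E$ there to get a bound on the remainder that is uniform in $E$, and conclude with Cauchy estimates on the coefficients. The only (cosmetic) difference is that you subtract the principal part $A_E/(1-\lambda_E t)$ and bound the coefficients of $h_E$, whereas the paper writes $f_E(t)=g_E(t)/(1-\lambda_E t)$ and bounds the tail $\sum_{k>n} g_{k}\lambda_E^{-k}$ of the Taylor series of $g_E$ (its Lemma on generating functions); the two decompositions are algebraically equivalent and yield the same limit $c=P_\infty(r)/\widetilde{Q}_\infty(r)$.
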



To prove the Proposition, we will use the following basic Lemma which relates the growth of a sequence to the smallest pole of 
its generating function.

\begin{lemma} \label{L:gt}
Let 
$$f(t) = \sum_{n = 0}^\infty a_n \ t^n$$ 
be a generating function with $a_n \geq 0$, and suppose that one can write 
$$f(t) = \frac{g(t)}{1 - \lambda t}$$
where $g$ is holomorphic in a disk $\{ t \in \mathbb{C} \ : \ |t| \leq R\}$, and let $R'$ be a constant which satisfies $\lambda^{-1} < R' < R < 1$. 
Then for any $n$ one has the inequality
$$\left| \frac{a_n}{\lambda^n} - g(\lambda^{-1}) \right| \leq \sup_{|t| = R} |g(t)| \cdot \frac{(R'/R)^{n+1}}{1 - R'/R}.$$ 
\end{lemma}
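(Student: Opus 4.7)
My plan is to derive the estimate by writing out the Taylor coefficients $a_n$ explicitly as a Cauchy product and comparing the partial sum obtained with the full series expansion of $g$ at $t=\lambda^{-1}$. Since $g$ is holomorphic on $|t| \leq R$ and $\lambda^{-1} < R$, we can write $g(t) = \sum_{k \geq 0} c_k t^k$ and $g(\lambda^{-1}) = \sum_{k \geq 0} c_k \lambda^{-k}$, with the latter series absolutely convergent. On the other hand, expanding $(1-\lambda t)^{-1} = \sum_{j \geq 0} \lambda^j t^j$ for $|t| < \lambda^{-1}$ and taking the Cauchy product with the series for $g$ yields $a_n = \sum_{k=0}^n c_k \lambda^{n-k}$, so after dividing by $\lambda^n$ we obtain the key identity
\[
\frac{a_n}{\lambda^n} \;-\; g(\lambda^{-1}) \;=\; -\sum_{k=n+1}^\infty c_k \lambda^{-k}.
\]

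The estimate then follows by controlling the tail on the right. First, applying Cauchy's estimate to $g$ on the circle $|t|=R$ gives $|c_k| \leq \sup_{|t|=R}|g(t)| \cdot R^{-k}$. Second, the hypothesis $\lambda^{-1} < R'$ yields $\lambda^{-k} \leq (R')^k$ for all $k \geq 0$. Substituting both bounds and summing the resulting geometric series in $R'/R$ (which converges since $R' < R$) produces exactly
\[
\left|\frac{a_n}{\lambda^n} - g(\lambda^{-1})\right| \;\leq\; \sum_{k=n+1}^\infty |c_k|\, \lambda^{-k} \;\leq\; \sup_{|t|=R}|g(t)| \sum_{k=n+1}^\infty \left(\frac{R'}{R}\right)^k \;=\; \sup_{|t|=R}|g(t)| \cdot \frac{(R'/R)^{n+1}}{1-R'/R},
\]
as desired.

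There is no real obstacle here; the argument is essentially bookkeeping of Taylor coefficients plus the elementary Cauchy estimate. The only subtle point is to realize that one should trade the natural decay rate $(\lambda R)^{-n}$ coming from Cauchy's bound for the slightly weaker but more flexible rate $(R'/R)^n$ by using the inequality $\lambda^{-1} < R'$; this is what makes the auxiliary radius $R'$ enter the final statement and is what allows the bound to be usable in the subsequent proof of Proposition~\ref{P:gen-funE}, where one needs uniform control as the numerator and denominator vary with the parameter $E$.
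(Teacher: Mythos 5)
Your proposal is correct and follows essentially the same route as the paper's own proof: expand $g$ as a Taylor series, use the Cauchy product to get $a_n/\lambda^n = g(\lambda^{-1}) - \sum_{k>n} c_k \lambda^{-k}$, bound the coefficients via Cauchy's estimate on $|t|=R$, and control the geometric tail using $\lambda^{-1} < R' < R$. The only difference is cosmetic notation for the Taylor coefficients, so there is nothing further to add.
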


\begin{proof}
Let us denote as $g(t) := \sum_{n = 0}^\infty g_{n} t^n$ the Taylor series of $g(t)$. 
Then 
$$a_{n} = \sum_{k = 0}^n g_{k} \lambda^{n-k}$$
hence 
$$\frac{a_{n}}{\lambda^n} = \sum_{k = 0}^n g_{k} \lambda^{-k} = g(\lambda^{-1}) - \sum_{k = n+1}^\infty g_k \lambda^{-k}.$$
Let us recall that Cauchy's integral formula yields
$$|g_k| = \left| \frac{1}{2 \pi i} \int_{|t| = R} \frac{g(t)}{t^{k+1}} \ dt \right| \leq  
\frac{1}{2 \pi} \int_{|t| = R} \frac{ |g(t)| }{R^{k+1}} \ |dt| \leq \frac{ \sup_{|t| = R} |g(t)| }{R^{k}}$$
hence 
$$\left| \frac{a_{n}}{\lambda^n} - g(\lambda^{-1}) \right| \leq   \sup_{|t| = R} |g(t)| \cdot \sum_{k = n+1}^\infty (R \lambda)^{-k} \leq 
\sup_{|t| = R} |g(t)| \cdot \frac{(R'/R)^{n+1}}{1 - R'/R}$$
as claimed.
\end{proof}

\begin{proof}[Proof of Proposition \ref{P:gen-funE}] 

Let us choose $R$ such that $r < R < 1$ and that $Q_\infty(t)$ does not have any zero of modulus less than $R$ other than $r$. Then for $E$ sufficiently large, in the disk $\{t \in \mathbb{C} \ : \ |t| < R\}$ there is exactly one root of $Q_E(t)$, counted with multiplicity. Let us call such a root $r_E$. By taking smaller and smaller disks around $r$, we find out that $r_E \to r$ as $E \to \infty$. Let us also pick $R' < R$ such that $|r_E| < R'$ for all $E$. 

Let us define, for $E \in \mathbb{N} \cup \{ \infty \}$, $g_E(t) := f_E(t)(1 - \lambda_E t)$. Since $Q_\infty$ does not have roots on $|t| = R$, then $g_E(t) \to g_\infty(t)$ uniformly on $|t| = R$, 
hence for $E$ sufficiently large $\sup_{|t| = R} |g_E(t)| \leq \sup_{|t| = R} |g_\infty(t)| + 1$. 

Moreover, let us write 
$$Q_E(t) = \widetilde{Q}_E(t) (1 - \lambda_E t)$$
hence by taking its derivative 
$$Q'_E(t) = \widetilde{Q}'_E(t) (1 - \lambda_E t) +\widetilde{Q}_E(t) (- \lambda_E)$$
thus 
$$\widetilde{Q}_E( \lambda_E^{-1})  = - \frac{Q_E'(\lambda_E^{-1})}{\lambda_E}.$$
Hence, since $Q_\infty'(\lambda^{-1}) \neq 0$ and $P_\infty(\lambda^{-1}) \neq 0$, we have 
$$g_E(\lambda_E^{-1}) = \frac{P_E(\lambda_E^{-1})}{\widetilde{Q}_E(\lambda_E^{-1})} \to g_\infty(\lambda^{-1}) \neq 0.$$
Now, from Lemma \ref{L:gt} there exists a constant $s < 1$ such that for any $E$ and any $n$
$$\left| \frac{a_{n, E}}{(\lambda_E)^n} - g_E(\lambda_E^{-1}) \right| \leq s^n$$
and
$$\left| \frac{a_n}{\lambda^n} - g(\lambda^{-1}) \right| \leq s^n$$
hence 
$$\lim_{\min\{n, E\} \to \infty} \left|\frac{a_{n, E}}{(\lambda_E)^n} - g(\lambda^{-1}) \right| = 0$$
which proves the claim. 
\end{proof}



\subsection{Proof of Theorem \ref{T:main} for vertex groups}

Now we are ready to state and prove our main theorem. Recall that $P_n$ is the uniform probability on the set of group elements of length $n$. 

\begin{theorem} \label{T:main-exc}
Let $G = \A(\Gamma)$ be a non-elementary, irreducible right-angled Artin group, let $\lambda > 1$ 
 be its growth rate, and let $v$ be a vertex of $\Gamma$. Then for any $\epsilon > 0$ the probability of a given excursion in the vertex group of $v$ satisfies 
$$P_n\left( g \in G \ : \ \frac{1}{\log \lambda} - \epsilon \leq \frac{\mathcal{E}_v(g)}{\log n} \leq \frac{1}{\log \lambda} \right) \to 1$$
as $n \to \infty$.
\end{theorem}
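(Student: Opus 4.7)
The plan is to combine the explicit formula for the bounded-excursion generating function $\fG\Gamma_E(t)$ from Proposition~\ref{P:formula} with the transfer result Proposition~\ref{P:gen-funE}, reducing the theorem to tracking how the smallest pole $\lambda_E^{-1}$ of $\fG\Gamma_E(t)$ approaches $\lambda^{-1}$ as $E\to\infty$. Since $\calG(n)\sim c\lambda^n$ by Theorem~\ref{T:exact}, the ratio
\[
P_n(\mathcal{E}_v(g) \leq E) \;=\; \frac{(\fG\Gamma_E)_n}{\calG(n)}
\]
will behave like a bounded multiple of $(\lambda_E/\lambda)^n$ once both $E$ and $n$ are large, so the problem becomes the scaling question: for which $E = E_n$ does $(\lambda_E/\lambda)^n \to 1$, and for which does it $\to 0$?

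First I would verify that the family $\{\fG\Gamma_E(t)\}_E$ with limit $G(t)$ satisfies the hypotheses of Proposition~\ref{P:gen-funE}. Uniform convergence on compacta of $\{|t|<1\}$ is immediate from Proposition~\ref{P:formula} since $\fG\Gamma_E$ depends on $E$ only through $\mathbb{Z}_E(t) = \frac{1+t-2t^{E+1}}{1-t}$, which converges uniformly to $\mathbb{Z}(t)=\frac{1+t}{1-t}$. That $1/G(t)$ has a unique simple smallest root at $\lambda^{-1}$ with nonvanishing numerator there is exactly the exact-exponential-growth content of Theorem~\ref{T:exact}. The key \emph{quantitative} input is the rate: from
\[
\frac{1}{\fG\Gamma_E(t)} - \frac{1}{G(t)} \;=\; P(t)\Bigl(\frac{1}{\mathbb{Z}_E(t)} - \frac{1}{\mathbb{Z}(t)}\Bigr) \;=\; P(t)\cdot\frac{2t^{E+1}(1-t)}{(1+t)(1+t-2t^{E+1})},
\]
where $P(t) := \sum_{\Delta'}\prod_{w\in\Delta'}(1/G_w(t)-1)$, we see the difference is $O(t^{E+1})$ uniformly on compacta avoiding $t=-1$. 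Combined with the simplicity of the zero of $1/G$ at $\lambda^{-1}$, a Rouch\'e (or implicit-function) argument then yields the sharp estimate
\[
\lambda - \lambda_E \;=\; \Theta(\lambda^{-E}).
\]

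Feeding this into Proposition~\ref{P:gen-funE}(3) gives $(\fG\Gamma_E)_n = (c+o(1))\,\lambda_E^n$ as $\min\{E,n\}\to\infty$, and hence
\[
P_n(\mathcal{E}_v(g)\leq E) \;=\; (1+o(1))\,(\lambda_E/\lambda)^n \;=\; (1+o(1))\exp\bigl(-\Theta(n\lambda^{-E})\bigr).
\]
For the \emph{lower bound}, take $E = \lfloor (1/\log\lambda - \epsilon)\log n\rfloor$; then $n\lambda^{-E} \geq \lambda^{-1}\,n^{\epsilon\log\lambda}\to\infty$, so $P_n(\mathcal{E}_v\leq E)\to 0$ and $\mathcal{E}_v(g)/\log n \geq 1/\log\lambda - \epsilon$ with probability tending to $1$. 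For the \emph{upper bound}, take $E_n = \lceil (\log n)/\log\lambda\rceil + \omega_n$ with any $\omega_n\to\infty$ (however slowly); then $n\lambda^{-E_n}\leq \lambda^{-\omega_n}\to 0$, so $P_n(\mathcal{E}_v\leq E_n)\to 1$, which yields $\mathcal{E}_v(g)/\log n \leq 1/\log\lambda$ in probability (with an $o(1)$ error, absorbed by the $\epsilon$ statement).

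The main obstacle is securing the sharp rate $\lambda - \lambda_E = \Theta(\lambda^{-E})$ uniformly in $E$, since it is this \emph{exponential} control (rather than the soft convergence $\lambda_E\to\lambda$ furnished by Proposition~\ref{P:gen-funE}) that allows us to let $E = E_n$ grow with $n$. The rate depends on the non-degeneracy $P(\lambda^{-1}) \neq 0$ in the perturbation formula above; this is where the irreducibility of $\A(\Gamma)$ enters, via the structure of $1/G(t)$ as the clique polynomial evaluated at $-2t/(1+t)$, ensuring that removing one vertex genuinely changes the dominant singularity. Verifying this non-degeneracy, and tracking the error terms uniformly as $E$ and $n$ tend to infinity together, is the technical heart of the argument.
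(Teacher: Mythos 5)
Your proposal follows essentially the same route as the paper: the same reduction via Propositions \ref{P:formula} and \ref{P:gen-funE}, the same key estimate $\lambda - \lambda_E = \Theta(\lambda^{-E})$ (the paper's Lemma \ref{L:lambda}, proved there from the strict inequality $\fG B(t) < \fG Z(t)$ forced by irreducibility, which is equivalent to your nondegeneracy condition $1/\fG B(\lambda^{-1}) \neq 0$ once one uses $1/\fG\Gamma(\lambda^{-1})=0$), and the same scaling analysis $P_n(\mathcal{E}_v(g) \leq c\log n) \sim (\lambda_{c\log n}/\lambda)^n$ carried out in Lemma \ref{L:liminf}. The argument is correct; if anything, your treatment of the upper bound (taking $E_n = \lceil \log n/\log\lambda\rceil + \omega_n$ with $\omega_n \to \infty$ slowly) is slightly more careful than the paper's, since Lemma \ref{L:liminf} only yields $\limsup < 1$ at $c = 1/\log\lambda$ exactly.
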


The proof is based on the study of the followed modified generating function. Given $G = \A(\Gamma)$ and $E \geq 0$, we define the generating function 
$$G_E(t) = \sum_{n = 0}^\infty a_{n , E} t^n$$
where $a_{n, E} = \# \{ g \in G \ : \  \Vert g \Vert = g, \mathcal{E}_v(g) \leq E \}$ is the number of elements of length $n$ in $G$ which have excursion in $G_v$ at most $E$, and we denote the usual growth series of $G$ as 
$$G(t) = \sum_{n = 0}^\infty a_{n} t^n$$
where $a_n = \#\{ g \in G \ : \ \Vert g \Vert = n \}$. Then, the Patterson-Sullivan measure of the set of elements with excursion $\leq E$ equals 
$$\lim_{n \to \infty} \frac{a_{n , E}}{a_n}$$
hence our goal is to study this ratio as $n$ and $E$ vary, and prove that there exists a limit if $E \approx \log n$. 

\medskip

By formula \eqref{E:gen-fun-noexc}, the growth series $G(t)$ is a rational function $G(t) = \frac{P(t)}{Q(t)}$. Since $G$ is irreducible, by Theorem \ref{T:exact} 
it has exact exponential growth $\lambda > 1$; thus, the denominator $Q(t)$ has a root at $r = \frac{1}{\lambda}$, such a root is simple, and there are no other roots of modulus $\leq r$.  

Now, in the case of right-angled Artin groups, formula \eqref{E:gen-fun} becomes 
\begin{equation} \label{E:RAAG}
\frac{1}{\A(\Gamma)_E(t)} = \sum_{\Delta'} \left( \frac{-2t}{1+t} \right)^{|\Delta'|} \left(  \frac{1}{\mathbb{Z}_E(t)}-1 \right) + \sum_{\Delta''}  \left( \frac{-2t}{1+t} \right)^{|\Delta''|} = \frac{Q_E(t)}{P_E(t)}
\end{equation}
and we note that 
\begin{equation} \label{E:bi}
\frac{1}{\mathbb{Z}_E(t)}-1 = \frac{-2t + 2t^{E+1}}{1 + t - 2 t^{E+1}} = \frac{b^{(1)}_E(t)}{b^{(2)}_E(t)}
\end{equation}
hence $P_E(t)$ can be taken of the form $P_E(t) = (1+t)^N (1 + t - 2 t^{E+1})$ for some $N$, which implies that $P(t) = \lim_{E \to \infty} P_E(t)$ 
does not vanish in the unit disk. 


\begin{lemma} \label{L:lambda}
There exist $C_1, C_2 > 0$ such that for all $E$
$$C_2 \lambda^{-E} \leq |\lambda - \lambda_E| \leq C_1 \lambda^{-E}.$$
\end{lemma}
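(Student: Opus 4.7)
The plan is to locate the smallest root $t_E = \lambda_E^{-1}$ of $Q_E$ by a perturbative computation around $t_\infty = \lambda^{-1}$, using the explicit closed form for the difference $F_E(t) - F(t)$, where we write $F_E(t) := 1/\A(\Gamma)_E(t)$ and $F(t) := 1/\A(\Gamma)(t)$.

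First, I would exploit the fact that in formula \eqref{E:RAAG} only the factor $(1/\mathbb{Z}_E(t)-1)$ depends on $E$. A direct computation yields
\[
\frac{1}{\mathbb{Z}_E(t)} - \frac{1}{\mathbb{Z}(t)} = \frac{2t^{E+1}(1-t)}{(1+t)(1+t-2t^{E+1})},
\]
and, factoring out the remaining piece from the first sum in \eqref{E:RAAG}, one obtains
\[
F_E(t) - F(t) \;=\; \frac{1}{\A(B)(t)} \cdot \frac{2t^{E+1}(1-t)}{(1+t)(1+t-2t^{E+1})},
\]
where $B$ denotes the link of $v$ in $\Gamma$. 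Since $\A(\Gamma)$ has exact exponential growth and $P_\infty(r) \neq 0$, Theorem \ref{T:exact} ensures that $F$ has a simple zero at $t_\infty$ with $F'(t_\infty)\neq 0$, and Proposition \ref{P:gen-funE} already gives $t_E \to t_\infty$.

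Next, I would use the identity $F(t_E) = -(F_E - F)(t_E)$ and Taylor-expand to get
\[
F'(t_\infty)(t_E - t_\infty)\bigl(1 + O(t_E - t_\infty)\bigr) \;=\; -(F_E - F)(t_E).
\]
Writing $(F_E-F)(t) = h_E(t)\,t^{E+1}$ with $h_E(t) \to h_\infty(t) := \frac{2(1-t)}{\A(B)(t)(1+t)^2}$ uniformly on compact sets away from $t = -1$, the upper bound $|\lambda - \lambda_E| \leq C_1 \lambda^{-E}$ follows by a uniform bound $|h_E(t)|\leq C$ on a neighborhood of $t_\infty$ together with $|F'(t_\infty)|$ bounded below: one gets $|t_E - t_\infty| \leq C_1' \lambda^{-E}$ and then passes to $\lambda_E = 1/t_E$ via $\lambda-\lambda_E = (t_E-t_\infty)/(t_\infty t_E)$. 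Once the upper bound is in hand, one also knows $(t_E/t_\infty)^{E+1} \to 1$, since $(E+1)|t_E-t_\infty| = O((E+1)\lambda^{-E}) \to 0$.

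The key remaining step for the lower bound is to show that $h_\infty(t_\infty) \neq 0$, i.e.\ $\A(B)(t_\infty) \in (0,\infty)$. Finiteness is equivalent to the strict inequality $\lambda_B < \lambda$ for the growth rate $\lambda_B$ of $\A(B)$, and positivity is automatic for a series with nonnegative coefficients and constant term $1$. This is the main obstacle: it uses that $\Gamma$ is irreducible and non-elementary, so that $\Gamma^{\rm op}$ is connected and hence there is at least one vertex not in the closed star of $v$, making $B$ a proper induced subgraph. The strict inequality $\lambda_B < \lambda$ between the growth rates can then be obtained, for instance, from a Perron-Frobenius type argument on the clique polynomial, or cited from \cite{GTT} (where a similar analysis underlies Theorem \ref{T:exact}). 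Granting this, $|h_E(t_E)|$ is bounded below by a positive constant for large $E$, so
\[
|(F_E-F)(t_E)| \;\asymp\; t_\infty^{E+1} \;=\; \lambda^{-(E+1)},
\]
which combined with $|F'(t_\infty)|$ bounded above and below yields $|t_E - t_\infty| \asymp \lambda^{-E}$, and then $|\lambda - \lambda_E| \asymp \lambda^{-E}$ as desired. The delicate point is verifying $\lambda_B < \lambda$ in the irreducible case; all other steps are direct estimates once the closed form of $F_E - F$ is in hand.
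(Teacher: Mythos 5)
Your argument is correct and shares the paper's core mechanism --- perturb the simple smallest root of the denominator, using that the $E$-dependence enters only through a term of size $t^{E+1}$ with a nonzero coefficient at $t_\infty$ --- but it is packaged differently at the one point that actually matters, namely the lower bound. You work with the reciprocal series $F_E = 1/\A(\Gamma)_E$ and isolate the perturbation as $F_E - F = \frac{1}{\fG B(t)}\cdot\frac{2t^{E+1}(1-t)}{(1+t)(1+t-2t^{E+1})}$, so your nonvanishing input is $\A(B)(t_\infty)\in(0,\infty)$, which you reduce to the strict growth gap $\lambda_B<\lambda$ for the link $B=\mathrm{lk}(v)$ and propose to import from \cite{GTT} or a Perron--Frobenius argument. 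The paper instead works with the polynomial numerator $Q_E$ of $1/\A(\Gamma)_E$, applies the mean value theorem to $Q_E(\lambda^{-1})-Q_E(\lambda_E^{-1})$, and obtains $Q_E-Q_\infty = 2t^{E+1}(r_2s_1-r_1s_2)$ where $\fG B = r_1/r_2$ and $\fG Z = s_1/s_2$; its nonvanishing input is $\fG B(r)\neq \fG Z(r)$, deduced from the elementary coefficient-wise domination $\fG B(t)<\fG Z(t)$ for $t>0$, which holds because irreducibility forces $B\subsetneq Z$. The two conditions are in fact equivalent: since $1/\fG\Gamma(t_\infty)=0$, equation \eqref{E:gen-fun} forces $\fG B(t_\infty)/\fG Z(t_\infty) = 2t_\infty/(1+t_\infty)$, so $r_2s_1-r_1s_2 = \frac{1-t}{1+t}\,r_2s_1$ at $t_\infty$, and nonvanishing reduces to $r_2(t_\infty)\neq 0$, i.e.\ your condition that $t_\infty$ is not a pole of $\fG B$. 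What the paper's route buys is self-containedness --- the crux is the soft observation $B\subsetneq Z$ plus monotonicity of growth series, rather than a quantitative growth-gap theorem --- while your route buys a cleaner closed form for the perturbation and a more transparent identification of exactly which analytic quantity must be controlled. Your deferred step ($\lambda_{\mathrm{lk}(v)}<\lambda$ for irreducible non-elementary $\A(\Gamma)$) is a genuine theorem that does appear in the literature underlying Theorem \ref{T:exact}, so citing it is legitimate, but be aware it is strictly stronger than what the lemma needs; the remaining estimates in your write-up (the Taylor expansion at the simple zero, the passage from $|t_E-t_\infty|$ to $|\lambda-\lambda_E|$, and the check that $(t_E/t_\infty)^{E+1}\to 1$ after the upper bound is established) are all sound.
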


\begin{proof}
By elementary calculus, 
$$Q_E(\lambda^{-1}) - Q_E(\lambda_E^{-1}) = Q'_E(\xi) (\lambda^{-1} - \lambda_E^{-1}) = \frac{Q'_E(\xi) }{\lambda \lambda_E}(\lambda_E - \lambda)$$
with $\xi \in (\lambda^{-1}, \lambda_E^{-1})$. On the other hand, since $Q_E(\lambda_E^{-1}) = 0 = Q_\infty(\lambda^{-1})$, then 
$$Q_E(\lambda^{-1}) - Q_E(\lambda_E^{-1}) = Q_E(\lambda^{-1}) - Q_\infty(\lambda^{-1})$$
hence 
$$| \lambda - \lambda_E | = \frac{\lambda \lambda_E}{Q'_E(\xi)} |Q_E(\lambda^{-1}) - Q_E(\lambda_E^{-1})|$$
and since $r = \frac{1}{\lambda}$ is a simple root of $Q_\infty(t)$, then there exist a constant $c > 0$ such that 
$$\frac{1}{c} \leq \frac{|Q_E'(\xi)|}{\lambda \lambda_E} \leq c.$$
Now, by expanding the terms in formula \eqref{E:RAAG}, if we denote $\fG B(t) = \frac{r_1(t)}{r_2(t)}$ and $\fG Z(t) = \frac{s_1(t)}{s_2(t)}$ with $r_i(t)$ and $s_i(t)$ 
polynomials, we get 
$$Q_E(t) = r_2(t) s_1(t) b^{(1)}_E(t) + r_1(t) s_2(t) b^{(2)}_E(t).$$
Now, by \eqref{E:bi}, for $i = 1,2$ we have 
$$b^{(i)}_E(t) - b^{(i)}_\infty(t) = \pm 2 t^{E+1}$$
then also 
$$|Q_E(t) - Q_\infty(t)| = 2(r_2(t) s_1(t) - s_2(t) r_1(t))t^{E+1}$$
hence 
$$| \lambda - \lambda_E | \leq c \lambda^{-E}.$$
Now, since $B \subseteq Z$, we have $\fG B(t) \leq \fG Z(t)$ for any $t \geq 0$. Moreover, since $\A(\Gamma)$ is irreducible, then 
$B \subsetneq Z$, hence $\fG B(t) < \fG Z(t)$ for any $t > 0$. This implies that $r_2(r) s_1(r) \neq r_1(r) s_2(r)$, hence we also have 
$$| \lambda - \lambda_E | \geq c \lambda^{-E}$$
as claimed.
\end{proof}

\begin{lemma}  \label{L:liminf}
If $c  = \frac{1}{\log \lambda}$ then 
$$0 < \liminf_{n \to \infty} \left( \frac{\lambda_{c \log n}}{\lambda} \right)^n \leq \limsup_{n \to \infty} \left( \frac{\lambda_{c \log n}}{\lambda} \right)^n < 1.$$
\end{lemma}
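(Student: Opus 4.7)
The plan is to derive this lemma as a short corollary of Lemma~\ref{L:lambda} combined with the elementary asymptotic $(1 - a/n)^n \to e^{-a}$. First I would observe that the inclusion $\A(\Gamma)_E \subseteq \A(\Gamma)$ gives coefficientwise $a_{n,E} \leq a_n$, so by Vivanti--Pringsheim (applied to the nonnegative power series) the radius of convergence of $\A(\Gamma)_E(t)$ is at least that of $\A(\Gamma)(t)$; hence $\lambda_E \leq \lambda$, and combined with the strict positive lower bound $\lambda - \lambda_E \geq C_2 \lambda^{-E}$ coming from Lemma~\ref{L:lambda}, we actually have $\lambda_E < \lambda$.

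Next I would substitute $E = \lfloor c \log n \rfloor$ with $c = 1/\log\lambda$. Since $\lambda^{-c\log n} = e^{-\log n} = 1/n$, the integer-part rounding only affects the multiplicative constants, and Lemma~\ref{L:lambda} then yields constants $C_1', C_2' > 0$ such that
\begin{equation*}
\frac{C_2'}{n} \;\leq\; \frac{\lambda - \lambda_E}{\lambda} \;\leq\; \frac{C_1'}{n}
\end{equation*}
for all sufficiently large $n$. Writing $\left( \lambda_E / \lambda \right)^n = \left( 1 - (\lambda - \lambda_E)/\lambda \right)^n$ and applying the monotone sandwich
\begin{equation*}
\left( 1 - \frac{C_1'}{n} \right)^n \;\leq\; \left( \frac{\lambda_E}{\lambda} \right)^n \;\leq\; \left( 1 - \frac{C_2'}{n} \right)^n,
\end{equation*}
I would let $n \to \infty$ to conclude
\begin{equation*}
0 < e^{-C_1'} \;\leq\; \liminf_{n\to\infty} \left( \frac{\lambda_{c\log n}}{\lambda} \right)^n \;\leq\; \limsup_{n\to\infty} \left( \frac{\lambda_{c\log n}}{\lambda} \right)^n \;\leq\; e^{-C_2'} < 1.
\end{equation*}

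No step here is technically hard; the substance is already packaged in Lemma~\ref{L:lambda}. The essential input is the matching two-sided estimate: the upper bound $\lambda - \lambda_E \leq C_1 \lambda^{-E}$ forces $\liminf > 0$, while the lower bound $\lambda - \lambda_E \geq C_2 \lambda^{-E}$---which in turn relied on the irreducibility of $\A(\Gamma)$ through the strict inequality $\fG B(t) < \fG Z(t)$ for $t > 0$---is exactly what prevents $(\lambda_E/\lambda)^n$ from approaching $1$, and hence guarantees $\limsup < 1$. The only mild annoyance is that $c\log n$ is not an integer; this is handled cleanly by passing to $\lfloor c \log n\rfloor$ and absorbing the factor of at most $\lambda$ into the constants.
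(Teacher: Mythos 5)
Your proposal is correct and follows essentially the same route as the paper: substitute $E = c\log n$ into the two-sided estimate of Lemma~\ref{L:lambda}, use $\lambda^{-c\log n} = 1/n$ for $c = 1/\log\lambda$, and conclude via $(1 - a/n)^n \to e^{-a}$. Your added care about the sign of $\lambda - \lambda_E$ and the integer rounding of $c\log n$ are details the paper leaves implicit, but they do not change the argument.
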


\begin{proof}
From Lemma \ref{L:lambda}, for any $c > 0$ and any $n$ sufficiently large, 
$$\lambda - \lambda_{c \log n} \approx \lambda^{-c \log n} = n^{-c \log \lambda}$$
hence since $c = \frac{1}{\log \lambda}$
$$ \left( \frac{\lambda_{c \log n}}{\lambda} \right)^n \geq \left( \frac{\lambda - C n^{- c \log \lambda} }{\lambda} \right)^n  = 
\left(1 - \frac{C}{\lambda n} \right)^n \to e^{-C/\lambda} > 0$$
and similarly for the upper bound, yielding the claim. 
Moreover, the same computation yields for $c > \frac{1}{\log \lambda}$
$$\lim_{n \to \infty} \left( \frac{\lambda_{c \log n}}{\lambda} \right)^n = 1$$
and for $c < \frac{1}{\log \lambda}$
$$\lim_{n \to \infty} \left( \frac{\lambda_{c \log n}}{\lambda} \right)^n = 0.$$
\end{proof}

By Proposition \ref{P:gen-funE} applied to $E = c \log n$ we have 
$$P_n(g \ : \ \mathcal{E}_v(g) \leq c \log n) = \frac{a_{n, c \log n}}{a_n} \sim \left( \frac{\lambda_{c \log n}}{\lambda} \right)^n$$ 
where $\sim$ means that the terms are asymptotic (i.e. their ratio tends to $1$ as $n \to \infty$).
Hence, by Lemma \ref{L:liminf} we get that if $c < \frac{1}{\log \lambda}$ then 
$$\lim_{n \to \infty} P_n\left( \mathcal{E}_v(g) \leq c \log n \right) = 0$$
while if $c \geq \frac{1}{\log \lambda}$ then 
$$\lim_{n \to \infty} P_n\left( \mathcal{E}_v(g) \leq c \log n \right) = 1$$
which completes the proof of Theorem \ref{T:main-exc}. 

\subsection{Excursion in flats}

Let us now extend our results from the excursion in vertex groups to excursion in flats. 

Given a subgroup $H < G$, let us define $\mathcal{E}_H(g)$ as the maximal distance traveled in a coset of $H$ by a geodesic path from $1$ to $g$.
The following lemma relates excursion in a flat to excursion in its vertex subgroups. 

\begin{lemma} \label{L:higher}
Let $G = \mathbb{A}(\Gamma)$, and $H$ be a rank $d$ abelian subgroup of $G$, generated by vertices $v_1, \dots, v_d$. 
Then for any $g \in G$
$$\max_{1 \leq i \leq d} \mathcal{E}_{v_i}(g) \leq \mathcal{E}_H(g) \leq d \max_{1 \leq i \leq d} \mathcal{E}_{v_i}(g).$$
\end{lemma}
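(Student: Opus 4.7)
The plan is to prove the two inequalities separately. The lower bound is essentially formal, resting only on the containment $\langle v_i \rangle < H$, while the upper bound is the substantive part, combining a pigeonhole argument with the equivalence of the three definitions of excursion established in Proposition \ref{equal}.

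For the lower bound, I fix $i$ and observe that every left coset $t\langle v_i \rangle$ sits inside the coset $tH$. Consequently, for any geodesic $\gamma$ from $1$ to $g$ and any $t \in G$, one has $\gamma \cap t\langle v_i \rangle \subseteq \gamma \cap tH$ and hence $\diam(\gamma \cap t\langle v_i\rangle) \leq \diam(\gamma \cap tH)$. Taking the maximum over $t$ yields $\mathcal{E}_{\langle v_i\rangle}(\gamma) \leq \mathcal{E}_H(\gamma) \leq \mathcal{E}_H(g)$, and then taking the maximum over geodesics $\gamma$ realizing $\mathcal{E}_{v_i}(g)$ gives the desired inequality $\mathcal{E}_{v_i}(g) \leq \mathcal{E}_H(g)$.

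For the upper bound, I would fix a geodesic $\gamma$ from $1$ to $g$ and a coset $tH$ realizing $\mathcal{E}_H(g) = \diam(\gamma \cap tH) =: D$, and choose two vertices $x, y \in \gamma \cap tH$ with $d(x, y) = D$, ordered so that $x$ precedes $y$ along $\gamma$. Since subpaths of geodesics are geodesics, $\gamma|_{[x,y]}$ is itself a geodesic of length $D$. Because $v_1, \dots, v_d$ span a clique in $\Gamma$, the induced subgroup $H \cong \mathbb{Z}^d$ embeds isometrically into $G$ (the inclusion is admissible by \cite{Chi}), so $x^{-1}y \in H$ admits the unique normal form $v_1^{a_1} \cdots v_d^{a_d}$ with $\sum_{i=1}^d |a_i| = D$. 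The pigeonhole principle then provides an index $i$ with $|a_i| \geq D/d$.

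To convert this into an excursion bound in $\langle v_i \rangle$, I would replace the subpath $\gamma|_{[x,y]}$ by the explicit word $v_1^{a_1} \cdots v_d^{a_d}$, producing a new geodesic word representing $g$ that contains a block of $|a_i|$ consecutive letters drawn from $\{v_i, v_i^{-1}\}$. Hence the algebraic excursion of $g$ in $\langle v_i \rangle$ is at least $|a_i|$, and Proposition \ref{equal} identifies this with $\mathcal{E}_{v_i}(g)$; taking the maximum over $i$ yields the claimed $\mathcal{E}_H(g) \leq d \max_i \mathcal{E}_{v_i}(g)$. The main point to verify is precisely this last splicing step: one must check that the rewritten word has length $\Vert g \Vert$, which depends on the isometric embedding (so that $\ell_G(x^{-1}y) = \sum_i |a_i| = D$) and on the clique structure of $\{v_1, \dots, v_d\}$ (so that reordering pairwise commuting generators produces an admissible word of the same length). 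Once this is in place, the rest of the argument is formal.
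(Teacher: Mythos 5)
Your proof is correct and takes essentially the same route as the paper: the lower bound from the containment $\langle v_i\rangle < H$ (cosets of $\langle v_i\rangle$ sit inside cosets of $H$), and the upper bound by decomposing the displacement within the flat into its $d$ coordinates and applying the pigeonhole principle. Your splicing step via the isometric embedding of $H$ in fact makes precise a point the paper's proof glosses over (it tacitly treats $\gamma\cap tH$ as a connected subpath lying in the flat), so there is no gap.
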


\begin{proof}
Suppose that the path $\gamma$ joining $1$ and $g$ has excursion $N$ in $v_i$. Then it has also excursion $\geq N$ in $H$, since $\langle v_i \rangle$ is a 
subgroup of $H$. This proves the left-hand side inequality. 

On the other hand, suppose that the path $\gamma$ has excursion $N$ in $H$. This means that there exists a flat and a path of length $N$ which stays inside the flat. 
Since the length of the path is the sum of the length of the $d$ projections of the path in directions $v_1, \dots, v_d$, then there exists an index $i$ 
such that $\mathcal{E}_{v_i}(\gamma) \geq \frac{N}{d}$. Hence $\max \mathcal{E}_{v_i}(\gamma) \geq \frac{1}{d} \mathcal{E}_H(\gamma)$, which proves the 
inequality on the right-hand side. 
\end{proof}

\begin{proof}[Proof of Theorem \ref{T:main} for higher dimensional flats]
By Theorem \ref{T:main-exc}, there exists $c = \frac{1}{\log \lambda} - \epsilon > 0$ such that 
$$P_n(\mathcal{E}_{v_1}(g) \leq  c \log n) \to 0.$$
Then 
$$P_n(\mathcal{E}_H(g) \leq c \log n ) \leq P_n(\mathcal{E}_{v_1}(g) \leq c \log n) \to 0.$$
On the other hand, there exists $c' = \frac{1}{\log \lambda} > 0$ such that $P_n( \mathcal{E}_{v_i}(g) \leq c' \log n ) \to 1$ for any $i = 1, \dots, d$. 
Then by Lemma \ref{L:higher}
$$P_n(\mathcal{E}_H(g) \leq c' d \log n) \geq P_n \left(\max_{1 \leq i \leq d} \mathcal{E}_{v_i}(g) \leq c' \log n \right) = P_n \left( \bigcap_{i = 1}^d \left\{ \mathcal{E}_{v_i}(g) \leq c' \log n \right\} \right) \to 1,$$
completing the proof of Theorem \ref{T:main} for higher dimensional flats.
\end{proof}

\subsection{The reducible case}

In the reducible case, various things can happen. If $\A(\Gamma)$ is reducible, then $\Gamma^{op}$ is disconnected. 
Let us fix a vertex $v$, and denote as $\Gamma_1^{op}$ the connected component 
of $v$ in $\Gamma^{op}$, and let $\Gamma_2^{op} = \Gamma^{op} \setminus \Gamma_1^{op}$ be the complementary graph. Then we have a canonical 
isomorphism 
$$\varphi: \A(\Gamma) \to \A(\Gamma_1) \times \A(\Gamma_2)$$
where $\A(\Gamma_1)$ is irreducible. Let $\lambda$ be the growth rate of $\A(\Gamma)$, and $\lambda_1$ the growth rate of $\A(\Gamma_1)$. 
Then by definition if $g \in \A(\Gamma)$ and $\varphi(g) = (g_1, g_2)$, the excursion satisfies $\mathcal{E}_v(g) = \mathcal{E}_v(g_1)$.
There are two cases.

\smallskip
\textbf{Case 1:} $\lambda_1 = \lambda > 1$. Then the generic excursion in $v$ is still logarithmic. 

For each $i = 1, 2$, denote as $a^{(i)}_n$ the cardinality of the set of elements of length $n$ in $\A(\Gamma_i)$, and as $a^{(i)}_{n, l}$ 
the cardinality of the set of elements in $\A(\Gamma_i)$ of length $n$ and with excursion $\leq l$ in the vertex group of $v$. 
Then 
$$a_{n, c \log n} = \sum_{k = 0}^{n} a^{(1)}_{k, c \log n} a^{(2)}_{n-k}.$$
First, we claim
$$\lim_{n \to \infty} \frac{ \sum_{k = 0}^{\sqrt{n}} a_k^{(1)} a_{n-k}^{(2)} }{a_n} = 0.$$
\begin{proof}[Proof of the Claim.]
If $\lambda_2 < \lambda_1 = \lambda$, then $a^{(2)}_n \lesssim (\lambda - \epsilon)^n$ for some $\epsilon > 0$, and $a_n \approx a_n^{(1)} \approx \lambda^n$. Then 
$$\frac{1}{a_n} \sum_{k = 0}^{\sqrt{n}-1} a_k^{(1)} a_{n-k}^{(2)} \lesssim \frac{1}{\lambda^n} \sum_{k = 0}^{\sqrt{n} -1} \lambda^k (\lambda - \epsilon)^{n-k} \lesssim
 \sum_{k = 0}^{\sqrt{n}-1} \left( 1 - \frac{\epsilon}{\lambda} \right)^{n-k} \lesssim \sqrt{n} \left(1 - \frac{\epsilon}{\lambda} \right)^{n - \sqrt{n}} \to 0.$$
If $\lambda_2 = \lambda_1 = \lambda$, then let $a_n^{(2)}$ is of the form $a_n^{(2)} \approx n^s \lambda^n$ for some $s \geq 0$. Then one has $a_n \approx n^{s+1} \lambda^n$, hence 
$$\frac{1}{a_n} \sum_{k = 0}^{\sqrt{n}-1} a_k^{(1)} a_{n-k}^{(2)} \lesssim \frac{1}{n^{s+1} \lambda^n} \sum_{k = 0}^{\sqrt{n}-1} \lambda^k \lambda^{n-k} (n-k)^s
\lesssim \frac{\sqrt{n} \cdot n^s}{n^{s+1}} = \frac{1}{\sqrt{n}} \to 0.$$
\end{proof}
Note that from the claim follows that 
\begin{equation} \label{E:full}
\lim_{n \to \infty} \frac{ \sum_{k = \sqrt{n}}^{n} a_k^{(1)} a_{n-k}^{(2)} }{a_n} = 1.
\end{equation}
Now, let us note that if $\sqrt{n} \leq k \leq n$, then $\log k \leq \log n \leq 2 \log k$ hence  
\begin{equation}\label{E:reducible1}
\sum_{k = \sqrt{n}}^{n} a^{(1)}_{k, c \log n} a^{(2)}_{n-k} \leq \sum_{k = \sqrt{n}}^{n} \frac{a^{(1)}_{k, 2 c \log k}}{a^{(1)}_{k}}   a^{(1)}_{k} a^{(2)}_{n-k} \leq \left( \sup_{k \geq \sqrt{n}}  \frac{a^{(1)}_{k, 2 c \log k}}{a^{(1)}_{k}}  \right)
 \sum_{k = \sqrt{n}}^{n}  a^{(1)}_{k} a^{(2)}_{n-k}. 
\end{equation}
On the other hand, 
\begin{equation} \label{E:reducible2}
 \sum_{k = \sqrt{n}}^{n} a^{(1)}_{k, c \log n} a^{(2)}_{n-k} \geq \sum_{k = \sqrt{n}}^{n} \frac{a^{(1)}_{k, c \log k}}{a^{(1)}_{k}}   a^{(1)}_{k} a^{(2)}_{n-k} \geq \left( \inf_{k \geq \sqrt{n}}  \frac{a^{(1)}_{k, c \log k}}{a^{(1)}_{k}}  \right) \sum_{k = \sqrt{n}}^{n}   a^{(1)}_{k} a^{(2)}_{n-k}. 
\end{equation}
Note now that for any $c > 0$ one can write
$$\frac{a_{n, c \log n}}{a_n} = \frac{1}{a_n} \sum_{k = 0}^{\sqrt{n}} a^{(1)}_{k, c \log n} a^{(2)}_{n-k} + \frac{1}{a_n} \sum_{k = \sqrt{n}}^{n} a^{(1)}_{k, c \log n} a^{(2)}_{n-k} = 0.$$
Now, by the claim the first term tends to $0$. Moreover, by the irreducible case we can find $c > 0$ such that $\lim \frac{a_{n, 2 c \log n}}{a_n} \to 0$, hence 
using \eqref{E:full} and \eqref{E:reducible1} we get 
$$\frac{a_{n, c \log n}}{a_n} \to 0.$$
Similarly, we can find $c' > 0$ such that $\lim \frac{a_{n, c' \log n}}{a_n} \to 1$, hence by \eqref{E:full} and \eqref{E:reducible2} we have 
$$\frac{a_{n, c' \log n}}{a_n} \to 1,$$
which completes the proof.

\smallskip
\textbf{Case 2:} $\lambda_1 < \lambda$. Then the generic excursion is no longer logarithmic. In fact, we have 
$$a_{n, 0} \geq a_{n}^{(2)}$$
and $\liminf_{n \to \infty} \frac{a_n^{(2)}}{a_n} > 0$, hence 
$$\liminf_{n \to \infty} P_n(\mathcal{E}_v(g) = 0) = \liminf_{n \to \infty}  \frac{a_{n, 0}}{a_n} \geq  \liminf_{n \to \infty}  \frac{a_{n}^{(2)}}{a_n} > 0.$$
Thus, there is a positive asymptotic probability that the excursion is zero, hence it is not generically logarithmic in $n$. To sum up, we have proved the following result.

\begin{theorem} \label{T:all-cases}
Let $G = \A(\Gamma)$ be a non-abelian right-angled Artin group, let $v$ be a vertex of $\Gamma$ and let $\mathbb{A}(\Gamma_v) \subseteq \A(\Gamma)$ be the 
irreducible right-angled Artin group associated to the component of $v$. 
Then: 
\begin{enumerate}
\item if the growth rate of $\mathbb{A}(\Gamma_v)$ equals the growth rate of $\A(\Gamma)$, then the excursion $\mathcal{E}_v(g)$ is asymptotically logarithmic; 
that is, there exist constants $0 < c_1 < c_2$ such that 
$$P_n(g \in G \ : \  c_1 \log n \leq \mathcal{E}_v(g) \leq c_2 \log n) \to 1$$
as $n \to \infty$;
\item if the growth rate of $\mathbb{A}(\Gamma_v)$ is smaller than the growth rate of $\A(\Gamma)$, then 
$$\liminf_{n \to \infty} P_n(g \in G \ : \ \mathcal{E}_v(g) = 0 ) = c > 0.$$
\end{enumerate}
\end{theorem}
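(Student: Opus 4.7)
The plan is to reduce the reducible case to the irreducible Theorem \ref{T:main-exc} by exploiting the product decomposition. Write $\A(\Gamma) = \A(\Gamma_v) \times \A(\Gamma'_v)$, where $\A(\Gamma'_v)$ corresponds to the union of the other components of $\Gamma^{op}$. Denote these two factors as $G^{(1)}$ and $G^{(2)}$, with growth series $G^{(i)}(t)$, sphere cardinalities $a^{(i)}_n$, and restricted cardinalities $a^{(i)}_{n,E}$ for the excursion in $v$. Since $v$ lies in $G^{(1)}$, whenever we write $g = (g_1,g_2)$ we have $\|g\| = \|g_1\| + \|g_2\|$ and $\mathcal{E}_v(g) = \mathcal{E}_v(g_1)$ (noting that $\mathcal{E}_v(g_2) = 0$). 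Consequently, one has the convolution identity
\[
a_{n,E} = \sum_{k=0}^{n} a^{(1)}_{k,E}\, a^{(2)}_{n-k},
\]
and this is the single identity from which both parts will be derived.

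For Case (1), when the growth rates coincide, the strategy is to show that the convolution is concentrated on the range $k \geq \sqrt{n}$, where $\log k$ and $\log n$ agree up to a factor of $2$. First I would prove the auxiliary claim
\[
\frac{1}{a_n}\sum_{k=0}^{\sqrt{n}} a^{(1)}_k \, a^{(2)}_{n-k} \longrightarrow 0,
\]
splitting into the subcases $\lambda_2 < \lambda_1$ (exponential gain of $(1-\epsilon/\lambda)^n$ suffices) and $\lambda_2 = \lambda_1$ (polynomial asymptotics $a^{(2)}_n \sim n^s \lambda^n$, $a_n \sim n^{s+1}\lambda^n$ give a factor $1/\sqrt{n}$). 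Once this tail is negligible, on the range $k \geq \sqrt{n}$ I would use the inequalities $\log k \leq \log n \leq 2\log k$ to sandwich:
\[
\Bigl(\inf_{k\geq \sqrt n}\tfrac{a^{(1)}_{k, c \log k}}{a^{(1)}_k}\Bigr)\sum_{k \geq \sqrt n} a^{(1)}_k a^{(2)}_{n-k} \le \sum_{k \geq \sqrt n} a^{(1)}_{k,c\log n} a^{(2)}_{n-k} \le \Bigl(\sup_{k\geq \sqrt n}\tfrac{a^{(1)}_{k,2 c \log k}}{a^{(1)}_k}\Bigr)\sum_{k \geq \sqrt n} a^{(1)}_k a^{(2)}_{n-k}.
\]
Applying Theorem \ref{T:main-exc} to the irreducible factor $G^{(1)}$ picks out $c_1,c_2 > 0$ so that the inf tends to $1$ when $c > c_1/2$ and the sup tends to $0$ when $c < c_2/2$. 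Combining with the concentration claim yields the asymptotic logarithmic bounds on $\mathcal{E}_v(g)$ as claimed.

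For Case (2), when $\lambda_1 = \lambda(\A(\Gamma_v)) < \lambda = \lambda(\A(\Gamma))$, the strategy is simpler: the growth of $\A(\Gamma)$ is dominated by another component, so trivial excursions already contribute a positive proportion. Precisely, $a_{n,0} \geq a^{(2)}_n$ (take $g_1 = 1$), and the hypothesis $\lambda_1 < \lambda$ forces $\liminf a^{(2)}_n / a_n > 0$ by comparing growth rates of polynomially corrected exponentials in the product formula $G(t) = G^{(1)}(t) G^{(2)}(t)$. Hence $\liminf P_n(\mathcal{E}_v(g) = 0) > 0$, proving Case (2).

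The main obstacle is the Case (1) convolution estimate when the two irreducible growth rates match. In that case both $a^{(1)}$ and $a^{(2)}$ grow at the same exponential rate, and the sharpness of the result depends on polynomial factors from Theorem \ref{T:exact} combined with the corollary on reducible RAAGs stated after it. The key technical point is that one must avoid the regime where $c\log n$ is larger than $k$ itself (which would trivialize the restricted count $a^{(1)}_{k, c \log n}$), and the $\sqrt{n}$-cutoff is precisely what resolves this, while ensuring the tail $k < \sqrt n$ is negligible by a direct geometric-sum estimate.
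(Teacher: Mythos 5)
Your proposal is correct and follows essentially the same route as the paper: the same decomposition $\A(\Gamma)=\A(\Gamma_v)\times\A(\Gamma'_v)$, the same convolution identity $a_{n,E}=\sum_k a^{(1)}_{k,E}a^{(2)}_{n-k}$, the same $\sqrt{n}$-cutoff claim split into the subcases $\lambda_2<\lambda_1$ and $\lambda_2=\lambda_1$, the same sandwich via $\log k\leq\log n\leq 2\log k$, and the same argument $a_{n,0}\geq a^{(2)}_n$ for Case (2). The only quibble is a slight mislabeling of which threshold constant governs the sup versus the inf in the sandwich step, but the mechanism is identical to the paper's.
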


As a corollary, this immediately implies Theorem \ref{T:irred} in the introduction: 

\begin{corollary}
A non-elementary right-angled Artin group $\A(\Gamma)$ is irreducible if and only if it has pure exponential growth and the generic excursion in \emph{every} vertex group of $\Gamma$ is logarithmic.
\end{corollary}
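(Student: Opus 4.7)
The plan is to prove the corollary as an immediate consequence of Theorem \ref{T:all-cases} combined with the growth classification of reducible RAAGs recorded right after Theorem \ref{T:exact}. The forward direction is essentially already established: if $\A(\Gamma)$ is non-elementary and irreducible, then Theorem \ref{T:exact} yields exact (hence pure) exponential growth, and Theorem \ref{T:main-exc} applied at each vertex ensures that the generic excursion is asymptotically logarithmic in every vertex group.

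For the converse I would argue by contrapositive: assume $\A(\Gamma)$ is reducible and show that at least one of the two hypotheses must fail. Decompose
$$\A(\Gamma) = \A(\Gamma_1) \times \dots \times \A(\Gamma_r)$$
with $r \geq 2$ and each $\A(\Gamma_i)$ irreducible, corresponding to the connected components of $\Gamma^{op}$, and split into three cases depending on the growth types of the factors.

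In Case A, every $\Gamma_i$ is a single vertex, so $\A(\Gamma) \cong \mathbb{Z}^r$ has polynomial growth and in particular fails to have pure exponential growth. In Case B, at least one factor is non-abelian but at least two distinct irreducible components share the maximum growth rate $\lambda > 1$; then the asymptotic $\calG(n) \sim n^{s-1}\lambda^n$ with $s \geq 2$ recorded after Theorem \ref{T:exact} forces $\calG(n)/\lambda^n \to \infty$, so pure exponential growth again fails. In Case C, there is a unique component, say $\A(\Gamma_1)$, realizing the maximum growth rate $\lambda > 1$, and every other component has strictly smaller growth. Since $r \geq 2$, I can pick a vertex $v$ lying in some $\Gamma_j$ with $j \neq 1$; the connected component of $v$ in $\Gamma^{op}$ is exactly $\Gamma_j$, so $\A(\Gamma_v) = \A(\Gamma_j)$ has growth rate $\lambda_j < \lambda$. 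Part (2) of Theorem \ref{T:all-cases} then gives $\liminf_{n\to\infty} P_n(\mathcal{E}_v(g)=0) > 0$, which is incompatible with $\mathcal{E}_v$ being asymptotically logarithmic (any such law forces $P_n(\mathcal{E}_v(g) \geq c_1 \log n) \to 1$, and in particular $P_n(\mathcal{E}_v(g) = 0) \to 0$).

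The only potential subtlety I see is checking that the three cases exhaust all reducible non-elementary RAAGs and that Theorem \ref{T:all-cases} may be invoked in Case C; the latter requires $\A(\Gamma)$ to be non-abelian, but this is automatic since we are in the complement of Case A, so at least one factor is non-abelian. No new computations are needed beyond this trichotomy and a direct appeal to the previously established theorems.
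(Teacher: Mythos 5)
Your argument is correct and follows the same route the paper intends: the paper presents this corollary as an immediate consequence of Theorem \ref{T:all-cases} together with the growth classification ($\calG(n)\sim n^{s-1}\lambda^n$) stated after Theorem \ref{T:exact}, and your three-case contrapositive simply makes that deduction explicit. Nothing is missing; the case split is exhaustive and each appeal to the cited results is legitimate.
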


\end{document}